\definecolor{webgreen}{rgb}{0,.5,0}
\definecolor{webbrown}{rgb}{.6,0,0}
\newcommand{\customsize}{\@setfontsize\customsize{5}{7}}
\def\Enn{\mathbb{N}}
\def\Zee{\mathbb{Z}}
\def\Que{\mathbb{Q}}
\newcommand{\limp}{\mathbin\rightarrow}
\g@addto@macro\bfseries{\boldmath}
\newcommand{\CaDiCaL}{\textsc{CaDiCaL}}
\newcommand{\march}{\textsc{march}}
\newcommand{\0}{{\tt0}}
\newcommand{\1}{{\tt1}}
\begin{document}

\theoremstyle{plain}
\newtheorem{theorem}{Theorem}
\newtheorem{corollary}[theorem]{Corollary}
\newtheorem{lemma}[theorem]{Lemma}
\newtheorem{proposition}[theorem]{Proposition}

\newtheorem{definition}[theorem]{Definition}
\theoremstyle{definition}
\newtheorem{example}[theorem]{Example}
\newtheorem{conjecture}[theorem]{Conjecture}

\theoremstyle{remark}
\newtheorem{remark}[theorem]{Remark}
\title{North--East Lattice Paths Avoiding $k$ Collinear Points via Satisfiability}
\author{
Aaron Barnoff \\
School of Computer Science \\
University of Windsor \\
Canada \\
\texttt{barnoffa@uwindsor.ca} \\
\and
Curtis Bright \\
School of Computer Science \\
University of Waterloo \\
Canada \\
\texttt{cbright@uwaterloo.ca} \\
}
\date{November 27, 2025}
\maketitle

\begin{abstract}
We investigate the Gerver--Ramsey collinearity problem
of determining the maximum number of points in a north--east lattice path without $k$ collinear points.
Using a satisfiability solver, up to isomorphism we enumerate all north--east lattice paths
avoiding~$k$ collinear points for $k\leq6$.  We also find a north--east lattice path
avoiding $k=7$ collinear points with 327 steps, improving on the previous best length of 260 steps found by Shallit.
\end{abstract}

\section{Introduction}

In 1971, Tom C.~Brown~\cite{Brown1971} asked the following: must every sufficiently long lattice path in the plane
with steps in $\{(1,0),(0,1)\}$ always contain $k$ collinear points, regardless of
the value of~$k\geq1$?
The following year, P.~L.~Montgomery~\cite{Montgomery1972} published a solution showing the answer to be yes:
every sufficiently long north--east lattice path must contain $k$ collinear points,
regardless of the choice of~$k$.

However, Montgomery did not provide a constructive bound on how long the walk
must be before $k$ collinear points were guaranteed.
In 1979, Gerver and Ramsey~\cite{GerverRamsey1979} provided such a bound.
They showed every north--east lattice path in the plane of length at least
\begin{equation}
 (k-1) 2^{2^{13}(k-1)^4} \label{eq:upper}
\end{equation}
must contain~$k$ collinear points.
The Gerver--Ramsey bound, while explicit, is extremely loose.
For example, the bound guarantees that every north--east walk of length at least~$2^{131073}$ contains $k=3$ collinear points,
although in fact every walk with just four steps contains three collinear points (see Figure~\ref{fig:4steps}).
In a separate paper published at the same time as Gerver and Ramsey's bound,
Gerver~\cite{GerverPaper} showed that there exists a north--east walk of length greater than
\begin{equation}
(32 (k-1)^{2\log_2(k-1)-7})^{1/18} \label{eq:lower}
\end{equation}
avoiding $k$ collinear points.  For example, for $k=3$, this bound says there exists a walk
with more than $(32\cdot2^{-5})^{1/18} = 1$ step avoiding three collinear points.
Although this is a super-polynomial bound, it is quite loose for small values of $k$:
it does not guarantee the existence of a 3-step walk avoiding $k$
collinear points until $k=30$.

The large gap between~\eqref{eq:upper} and~\eqref{eq:lower} means that
precisely how long north--east lattice paths can be while avoiding
$k$ collinear points is unknown.  In this paper, we study the problem of
determining this length exactly for small values of~$k$---%
a problem posed in 1979 by A.~Meir~\cite{crux}.
Let $a(k)$ denote the smallest integer
such that all north--east lattice paths of length $a(k)$ contain $k$ collinear points,
so that $a(k)-1$ is the length of the longest north--east lattice path
avoiding $k$ collinear points.
Meir noted that $a(3)=4$~\cite{crux},
and in 2013, J.~Shallit~\cite{A231255} computationally determined
$a(4)=9$, $a(5)=29$, and $a(6)=97$.
He also
established the lower bound $a(7)\ge 261$
by finding a north--east lattice path of length 260 without 7 collinear points.

We call a north--east lattice path without $k$ collinear points a \emph{$\textit{GR}(k)$ walk}
in honour of Gerver and Ramsey, and we call a point \emph{GR$(k)$-reachable} if it is
reachable from the origin by a GR($k$) walk.  In addition, if the GR($k$) walk has length $a(k)-1$ (i.e.,
has $a(k)-1$ steps and therefore contains $a(k)$ points) we call the GR($k$) walk \emph{maximal}.
Although the results of Montgomery and Gerver--Ramsey imply a maximal GR($k$) walk
exists for every $k$, there may exist multiple maximal GR($k$) walks for given $k$.
Up to isomorphism, we find there are two distinct maximal GR(4) and GR(6) walks
and a single maximal GR(5) walk (see Section~\ref{sec:enumeration-kleq6}).
The unique maximal GR(5) walk is visually depicted in Figure~\ref{fig:28steps}.

\begin{figure}
    \centering
    \includegraphics[width=0.6\linewidth]{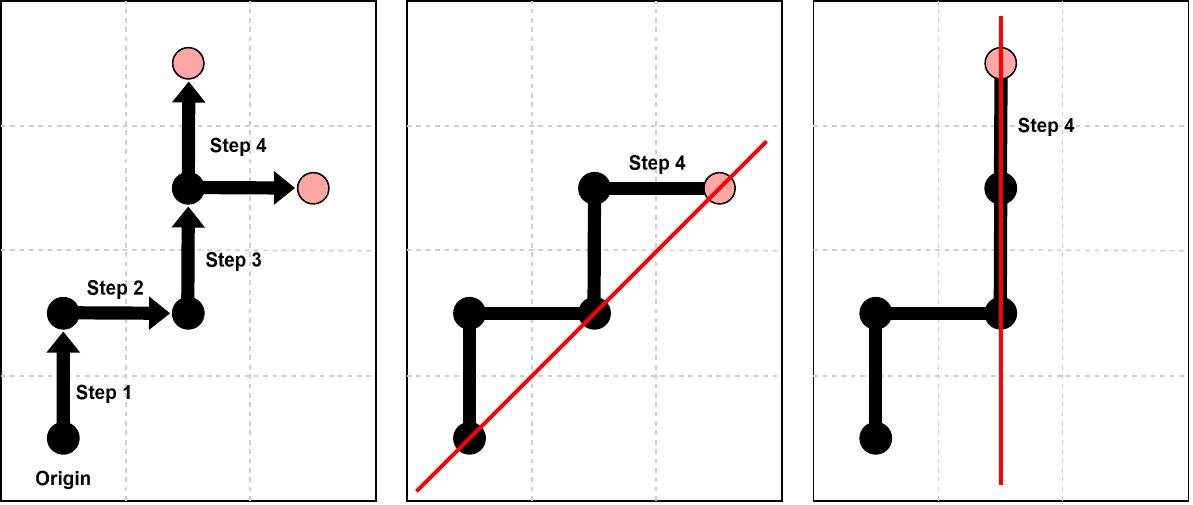}
    \caption{A visual representation of the longest north--east walk avoiding $k=3$ collinear points.  Walking two steps in the same direction
    introduces three collinear points, so the longest walk avoiding three collinear points alternates directions.}
    \label{fig:4steps}
\end{figure}

\begin{figure}
    \centering
    \includegraphics[width=0.3\linewidth]{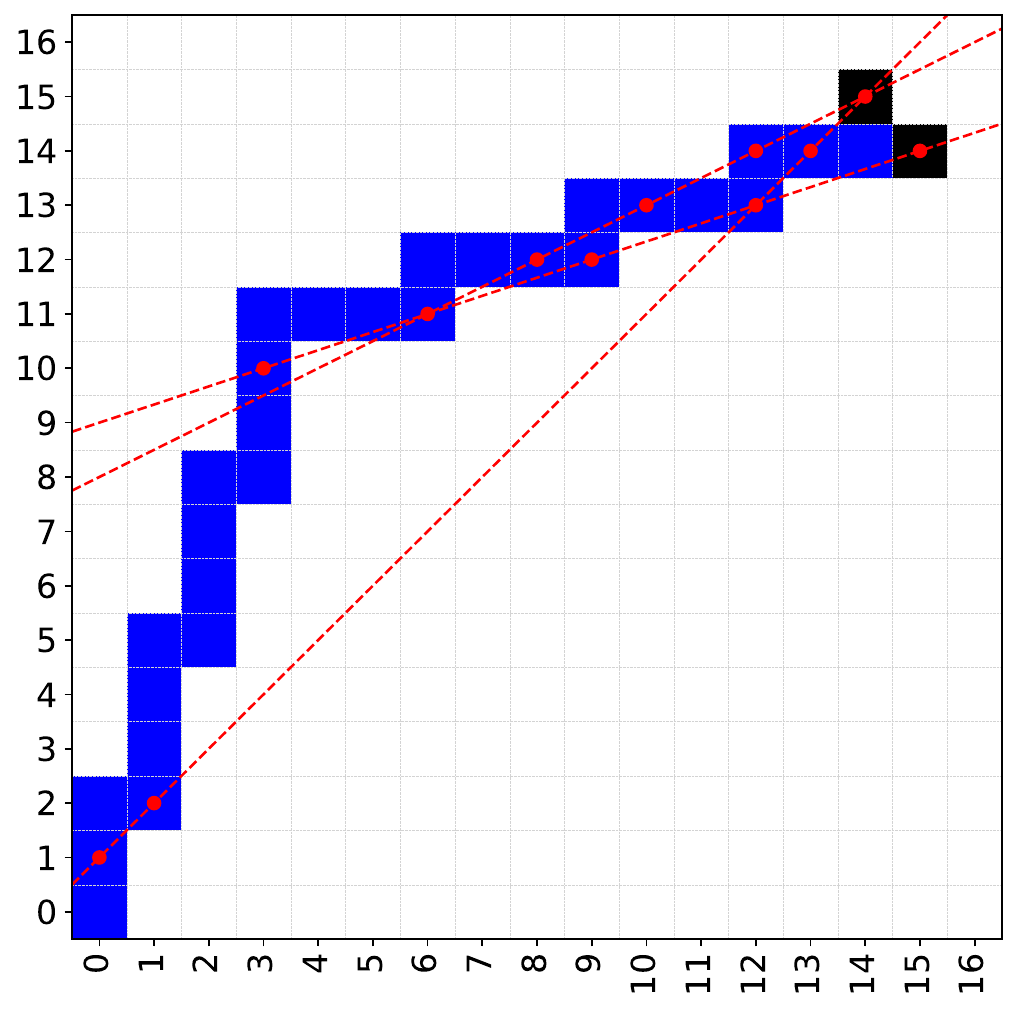}
    \caption{The unique longest GR(5) walk up to isomorphism;
    any additional step (black) creates five collinear points.}
    \label{fig:28steps}
\end{figure}

The problem of finding maximal GR($k$) walks is difficult because the search space grows exponentially in the length of the walk:
each $m$-step walk chooses north or east at each step, yielding~$2^m$ possibilities. 
Establishing $a(k)=m$ requires showing both that every $m$-step walk contains $k$ collinear points (thereby showing $a(k)\leq m$) and
that some $(m-1)$-step GR($k$) walk exists (thereby showing $a(k)\geq m$).
Without loss of generality, to prove $a(k)\leq m$ only walks starting at the origin $(0,0)$ and ending on the line $y=m-x$ need to be considered.

In this paper, we improve the lower bound on $a(7)$ by finding a GR(7) walk with 327 steps.
We also enumerate all GR($k$) walks for $k\leq6$, and as a consequence confirm Shallit's computation of $a(k)$ for $k\leq6$.
Our searches and enumerations for GR($k$) walks are performed using a satisfiability (SAT) solver.
SAT solvers are exceptional general-purpose search tools~\cite{Bright2022}
and have been surprisingly effective at solving problems in discrete geometry~\cite{Heule2024,Subercaseaux2025,Subercaseaux2024},
finite geometry~\cite{LamPaper}, infinite graph theory~\cite{Subercaseaux2023}, and various puzzles in discrete mathematics~\cite{Bright2020}.
In fact, certain combinatorial problems with enormous search spaces (like the Boolean Pythagorean triples
problem~\cite{Heule2017}) have \emph{only} been solved using a SAT solver, despite at first glance having nothing to do with Boolean logic.
SAT solvers also have several other advantages over searching with custom-written code:
from a correctness perspective, they provide \emph{proof certificates} when the object being searched for does not exist.
For example, when the SAT solver determines that there exists no $m$-step GR($k$) walk, it also
provides a certificate that can be certified by an independent proof verifier, a relatively simple
piece of software.  Consequently, only the proof verifier needs to be trusted, not the SAT solver itself.
The SAT encoding also needs to be trusted, but it tends to be simpler to write a SAT encoding than it is to write optimized search code.
We describe our SAT encoding for the Gerver--Ramsey collinearity problem in Section~\ref{sec:satisfiability}.

The primary contribution of this paper is a SAT-based method of finding long GR($k$) walks and an
experimental study of the Gerver--Ramsey problem for $k\leq7$.  In particular, we
enumerate all GR($k$) walks up to isomorphism for $k\leq6$.
In the process, we find all maximal GR($k$) walks for $k\leq6$, accompanied by proof certificates that no longer GR($k$) walks exist (see Section~\ref{sec:enumeration-kleq6}).
We also determine the north-most and east-most GR(7)-reachable points using up to 267 steps, as well as the first GR(7)-unreachable
point on the line $y=x+1$ (see Section~\ref{sec:gr7reachability}).
Lastly, we find a GR(7) walk of length 327 (see Section~\ref{sec:gr7improvement}), improving on the previously longest known GR(7) walk of length 260~\cite{A231255}.

\section{Satisfiability Solving}\label{sec:satisfiability}

The Boolean satisfiability problem (SAT) asks whether a Boolean formula admits an assignment of truth values to its variables making the formula evaluate to true.
It was the first problem proven to be NP-complete~\cite{Cook1971}, and it remains a cornerstone of computational complexity theory.
Over the last several decades, an active research community has developed increasingly efficient SAT solvers.
Modern SAT solvers require the input formula to be specified in a format known as conjunctive normal form (CNF) defined in terms
of literals and clauses.
A \emph{literal} is a Boolean variable $p$ or its negation $\lnot p$.
A \emph{clause} is a disjunction of literals.
A formula is in \emph{CNF} if it is a conjunction of clauses.
For example, $(p \lor q) \land (p) \land (\lnot p \lor q)$ is in CNF and contains three clauses, the second of which is a \emph{unit clause} (consisting of a single literal).
We may also use the implication connective to express clauses with the understanding that $p\limp q$ is shorthand for $\lnot p\lor q$.
A \emph{satisfying assignment} of a formula is a true/false assignment to the variables of the formula
such that the whole formula evaluates to true under that truth assignment (i.e.,
every clause contains at least one literal assigned to true).

To apply a SAT solver to a search problem, the problem must be encoded as a CNF formula in such a way that
the search problem has a solution if and only if the CNF formula has a satisfying assignment
(i.e., the formula is \emph{satisfiable}).
Moreover, it should be possible to take a satisfying assignment of the formula and
translate it into a solution of the search problem.
Conversely, if the SAT solver determines the CNF formula has no satisfying assignment
(i.e., the formula is \emph{unsatisfiable})
this implies the search problem has no solution.

We describe our encoding of the Gerver--Ramsey collinearity problem into conjunctive normal form in Section~\ref{sec:encoding},
and describe an encoding of the problem into a more general format called ``at-least-$k$ conjunctive normal form'' in Section~\ref{sec:knf}.
We also found that using some clauses encoding the reachability of points was useful---although not strictly
required, they improved the solving times in practice (see Section~\ref{sec:reachability}).
Additionally, we found some of the constraints in our encoding were redundant in practice and removing them
made the solver more efficient (see Section~\ref{sec:heuristic}).
Finally, we describe our process of parallelization using a technique known as cube-and-conquer in Section~\ref{sec:c&c}.
Cube-and-conquer was necessary in order to solve the hardest SAT instances in a reasonable amount of time.

\subsection{SAT encoding}\label{sec:encoding}

We now describe our SAT encoding asserting the existence of a GR($k$) walk with $m$ steps.
The values of $k$ and $m$ are taken to be fixed in advance, and
we let $n=m+1$ denote the number of points in the walk.
Without loss of generality we take our starting point as the origin $(0,0)$, so
an $m$-step north--east lattice path ends on the line $y=m-x$.
In order to describe such a walk, we use the Boolean variable $v_{x,y}$ to represent
that the point $(x,y)$ appears on the walk.  There are $i+1$ points
reachable from the origin in a north--east lattice path with $i$ steps,
so there are a total of $\sum_{i=0}^{n-1}(i+1)=n(n+1)/2$
point variables in our instance.

\subsubsection{Path constraints}

Next, we describe the constraints on the variables $v_{x,y}$ that must hold
in north--east lattice paths.  In what follows, $(x,y)$ is one of the
points that might appear on an $m$-step ($n$-point) north--east lattice path (i.e., $(x,y)\in\Enn^2$
with $x+y\leq m$).
First, we know that if point $(x,y)$ is on the path
then either $(x+1,y)$ or $(x,y+1)$ is on the path, unless $(x,y)$ is the final point.
We encode this constraint using
the clauses
\[ v_{x,y} \limp (v_{x+1,y} \lor v_{x,y+1}) \qquad\text{for $x+y\neq m$.} \]
Second, we know that if point $(x,y)\neq(0,0)$ is on the path
then either $(x-1,y)$ or $(x,y-1)$ is on the path.  We encode this using
the clauses
\[ v_{x,y} \limp (v_{x-1,y} \lor v_{x,y-1}), \quad v_{0,y} \limp v_{0,y-1}, \quad v_{x,0}\limp v_{x-1,0} \qquad\text{for $x,y\geq1$.} \]
Third, we know that a path never splits into two directions:
both $(x+1,y)$ and $(x,y+1)$ can never both be on the path at the same time.
We encode this using the clauses
\[ \lnot v_{x+1,y}\lor\lnot v_{x,y+1} \qquad\text{for $x+y\neq m$.} \]
Lastly, we assert that the origin is on the path with the unit clause $v_{0,0}$.
In total, we have $O(n^2)$ path constraints.
A satisfying assignment of these constraints provides a north--east lattice path
starting from the origin and ending after $m$ steps.

\subsubsection{Non-collinearity constraints}
\label{sec:non-collinearity}

In order to assert that the path is a GR($k$) walk, we need to assert
that it does not contain~$k$ collinear points.  To do this, we use a
generalization of clauses
known as cardinality constraints.  An at-most-$k$ cardinality constraint
over a set of literals $X=\{x_1, \dotsc, x_s\}$
says that no more than~$k$ of the literals in $X$ can be assigned true, and
we use the notation $\sum_{i=1}^s x_i \leq k$ to represent this constraint.
Note that cardinality constraints are not natively in conjunctive normal form. However,
there are a number of efficient ways of converting cardinality
constraints into CNF such as the sequential counter encoding~\cite{Sinz2005} and the totalizer encoding~\cite{totalizer}.
Moreover, other formats like at-least-$k$ conjunctive normal form~\cite{Reeves2025}
have native support for cardinality constraints (see Section~\ref{sec:knf}).

Now, we assert that no $k$ collinear points appear on the path.  This requires determining
all ways in which the points in our instance (i.e., $(x,y)\in\{0,\dotsc,n-1\}^2$ with $x+y<n$)
might lie on the same line.  First, consider the case of avoiding $k$ points on the same
vertical line.  Avoiding $k$ points on the line $x=i$ can be accomplished with the cardinality constraint
\[ \sum_{j=0}^{n-i-1} v_{i,j} \leq k - 1 , \]
and avoiding $k$ points on the horizontal line $y=j$ can be accomplished with the
cardinality constraint
\[ \sum_{i=0}^{n-j-1} v_{i,j} \leq k - 1 . \]
Generalizing this, avoiding $k$ points on the line with slope $s$ and $y$-intercept $b$
can be accomplished with the cardinality constraint
\[ \sum_{\substack{i,si+b\in\Enn\\i<(n-b)/(s+1)}} v_{i,si+b} \leq k - 1 . \]
The slope $s$ will be of the form $r/d$ where $r\in\Enn$ is the rise of the slope
and $d\in\Enn$ is the run of the slope.
We can assume that $r+d\leq (n-1)/(k-1)$, since otherwise there will necessarily be fewer than
$k$ points on a line with slope $s=r/d$ in the relevant region, as indicated in
Proposition~\ref{prop:slope2}.
\begin{proposition}\label{prop:slope2}
Suppose the region $\{\,(x,y)\in\Enn^2:x+y<n\,\}$ contains $k$ points on a line with
slope $r/d$.  Then $r+d\leq (n-1)/(k-1)$.
\end{proposition}
\begin{proof}
Order the $k$ points on a line with slope $r/d$ so that $(x_{i+1},y_{i+1})=(x_1+id,y_1+ir)$
for $0\leq i<k$.
The number of steps it takes to walk from $(x_1,y_1)$ to $(x_k,y_k)$ is
$x_k-x_1+y_k-y_1=d(k-1)+r(k-1)=(r+d)(k-1)$.  It is possible to take up to $n-1$ north--east steps while
remaining in the region $\{\,(x,y)\in\Enn^2:x+y<n\,\}$, so
if all points are in this region then $(r+d)(k-1)\leq n-1$, and $r+d\leq(n-1)/(k-1)$.
\end{proof} 
We can also assume that the slope
is written in lowest terms so that $r$ and~$d$ are coprime.  The probability
two integers in $[1,n)$ are coprime tends to $6/\pi^2$ when $n\to\infty$,
so there are asymptotically
$O((n/k)^2)$ slopes to consider when the rise and run are each bounded
by $n/k$ (cf.~Proposition~\ref{prop:slope3} below).

Each slope $s=r/d$ forms a line $y=sx$ for which we add the constraint $\sum_{i=0}^{\lceil n/(s+1)\rceil-1}v_{i,si}\leq k-1$.
There are $O(n^2/k^2)$ slopes to consider, so there are $O(n^2/k^2)$ lines to consider with zero $y$-intercept.
Next, consider lines with positive $y$-intercepts $b\in\Que$.
Note $b\leq n$, otherwise $y=si+b>n$.  Also,
the denominator of $b$ in lowest terms must divide $d$,
otherwise $y=(r/d)i+b$ would not be an integer.  Thus we have $b=\alpha/d$ where
$\alpha\leq nd$ is a positive integer.
Thus, there are $O(nd)=O(n^2/k)$ lines with positive $y$-intercepts to consider.  By symmetry,
there are also $O(n^2/k)$ lines with positive $x$-intercepts to consider (which are equivalent to the lines
with $b<0$).  Thus, in total there are $O(n^2/k)$ values of $b$
to consider.

Since there are $O(n^2/k^2)$ slopes $s$ to consider and $O(n^2/k)$ $y$-intercepts $b$ to consider,
there are $O(n^4/k^3)$ cardinality constraints in total.
Not all these constraints are necessary to include; e.g., if there are
strictly less than $k$ variables $v_{x,y}$ corresponding to points on the line $y=sx+b$,
then the constraint associated with this line is unnecessary, since
the constraint $\sum_{y=sx+b}v_{x,y}<k$ will always be satisfied.
Proposition~\ref{prop:maxslope} below provides an additional
restriction on the line slopes, showing certain
non-collinearity constraints to be unnecessary.

\begin{proposition}\label{prop:maxslope}
A north--east lattice path without $k-1$ consecutive
steps in the same direction does not have $k$ points on a line with
slope $s>k-2$.
\end{proposition}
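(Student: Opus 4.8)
The plan is to argue by contradiction. Suppose $P$ is a north--east lattice path containing no $k-1$ consecutive steps in a common direction, yet having $k$ of its points on a line $\ell$ of slope $s>k-2$. First I would normalize the situation: the cases $k\le2$ are trivial, so take $k\ge3$, whence $s>k-2\ge1>0$ and $\ell$ is not horizontal. The vertical case is dispatched separately and immediately, since if $k$ points of $P$ lie on a vertical line $x=c$, then the portion of $P$ between the lowest and the highest of them consists solely of north steps, at least $k-1$ of them, contradicting the hypothesis. So assume $\ell$ has finite positive slope and write $s=r/d$ in lowest terms with $r,d\ge1$ integers; the strict inequality $s>k-2$ then upgrades over the integers to $r\ge(k-2)d+1$. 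Label the $k$ points of $P$ lying on $\ell$ as $Q_1,\dots,Q_k$ in the order they appear along $P$; since both coordinates are non-decreasing along a north--east path and $\ell$ is not vertical, we get $x(Q_1)<\dots<x(Q_k)$.

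The combinatorial core is a run-counting estimate. Write $A=x(Q_k)-x(Q_1)$ for the number of east steps and $B=y(Q_k)-y(Q_1)=sA$ for the number of north steps that $P$ takes between $Q_1$ and $Q_k$. Since $P$ has no $k-1$ consecutive north steps, every maximal run of north steps has length at most $k-2$; within the portion of $P$ from $Q_1$ to $Q_k$, these north-runs (each of length at most $k-2$) are separated by the $A$ east steps, so there are at most $A+1$ of them. Hence $B\le(k-2)(A+1)$, which rearranges to $\bigl(s-(k-2)\bigr)A\le k-2$.

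The difficulty is that this estimate alone is too weak: it only gives $s\le(k-2)(1+1/A)$, which does not force $s\le k-2$ when $A$ is small. The crucial additional ingredient is a lower bound on $A$ coming from the denominator of the slope. Any two lattice points on a line of slope $r/d$ in lowest terms have $x$-coordinates differing by a multiple of $d$, so each $x(Q_{i+1})-x(Q_i)$ is a positive multiple of $d$, and therefore $A=\sum_{i=1}^{k-1}\bigl(x(Q_{i+1})-x(Q_i)\bigr)\ge(k-1)d$. Feeding this into the estimate above, together with $s-(k-2)=\bigl(r-(k-2)d\bigr)/d\ge1/d$, gives $A/d\le\bigl(s-(k-2)\bigr)A\le k-2$, so $A\le(k-2)d$; this contradicts $A\ge(k-1)d$. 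I expect the main obstacle to be recognizing that the naive run count must be sharpened, and that the right sharpening is to exploit the slope's denominator (which is exactly where the single unit of integer slack, $r\ge(k-2)d+1$, becomes a genuine contradiction); the only fiddly points are the two integrality upgrades and the separate handling of vertical lines. It is worth noting that only the absence of long north-runs is used here — the hypothesis on east-runs is not needed for this direction.
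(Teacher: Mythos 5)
Your proof is correct and follows essentially the same route as the paper's: both arguments bound the northward displacement between the first and last of the $k$ collinear points above by $(k-2)(\Delta x+1)$ using the forbidden-run hypothesis, and below by $(k-2)\Delta x+(k-1)$ using integrality, yielding the same one-unit contradiction. The only difference is cosmetic: you obtain the lower bound via the reduced denominator $d$ of the slope (each $x$-gap is a positive multiple of $d$ and $s-(k-2)\ge 1/d$), whereas the paper applies integrality directly to each $y$-difference; you also explicitly dispatch the vertical and $k\le 2$ cases, which the paper leaves implicit.
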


\begin{proof}
For contradiction, suppose a north--east lattice path without $k-1$ consecutive steps in the same
direction has $k$ points $(x_1,y_1)$, $\dotsc$, $(x_k,y_k)$ on a line with slope $s>k-2$.
Suppose $x_k-x_1=r$ and
let $V_i$ denote the number of north steps taken along the line $x=i$, noting that
$V_i\leq k-2$ since the path never has $k-1$ consecutive north steps.
The total number of north steps
taken from $(x_1,y_1)$ to $(x_k,y_k)$ is $y_k-y_1=\sum_{i=x_1}^{x_k}V_i$.
Since $V_i\leq k-2$ and there are $r+1$ summands,
$\sum_{i=x_1}^{x_k}V_i\leq (r+1)(k-2)$.

Also note that $y_{i+1}-y_i=s(x_{i+1}-x_i)$ since $(x_i,y_i)$ and $(x_{i+1},y_{i+1})$
are both on a line with slope $s$.  Since $s>k-2$, we have $y_{i+1}-y_i>(k-2)(x_{i+1}-x_i)$,
and since both sides are integers, we have $y_{i+1}-y_i\geq(k-2)(x_{i+1}-x_i)+1$.
Summing this from $i=1$ to $k-1$ we obtain
\begin{align*}
\sum_{i=1}^{k-1}(y_{i+1}-y_i) &\geq \sum_{i=1}^{k-1}\bigl((k-2)(x_{i+1}-x_i)+1\bigr) \\
&= (k-2)(x_k-x_1)+(k-1) \\
&= r(k-2)+(k-1) = (r+1)(k-2)+1 .
\end{align*}
However, the left-hand side equals $y_k-y_1=\sum_{i=x_1}^{x_k}V_i$.  Putting our bounds on $y_k-y_1$ together,
\[ (r+1)(k-2) + 1 \leq y_k-y_1 \leq (r+1)(k-2) ,  \]
a contradiction.
\end{proof}

As a result of Proposition~\ref{prop:maxslope}, we do not need to consider cardinality
constraints corresponding to lines with slopes $s>k-2$.  Symmetrically,
we also do not need to consider constraints corresponding to lines with slopes $s<1/(k-2)$.
Even when the encoding is optimized to remove constraints proven to be
unnecessary, the cardinality constraints still dominate the encoding size. For example,
with $n=325$ and $k=7$ there are about $953{,}000$ non-collinearity constraints
and about $158{,}000$ path constraints.

Finally, Proposition~\ref{prop:maxslope} allows the bounds on the rise and run in
Proposition~\ref{prop:slope2} to be slightly tightened.
\begin{proposition}\label{prop:slope3}
Suppose the region $\{\,(x,y)\in\Enn^2:x+y<n\,\}$ contains $k>2$ points on a line with
slope $r/d\leq k-2$.  Then $r < (n-1)/k$.
\end{proposition}
\begin{proof}
By Proposition~\ref{prop:maxslope}, we have $r/(k-2)\leq d$, and by Proposition~\ref{prop:slope2}, we have
$r+d\leq(n-1)/(k-1)$.  Combining these, we have $r(1+1/(k-2))\leq(n-1)/(k-1)$.  Multiplying both sides
by $(k-2)/(k-1)$ gives $r\leq(n-1)(k-2)/(k-1)^2<(n-1)/k$.
\end{proof}
Similarly, it also follows that $d<(n-1)/k$.  Thus we need only consider
lines whose slopes have rise and run in $\{1,\dotsc,\lfloor (n-1)/k\rfloor\}$.

\paragraph{Horizontal/vertical non-collinearity optimization}

The only way it is possible to have~$k$ vertical collinear points in a north--east lattice path
is by taking a north step $k-1$ times in a row.  Thus, it is possible to block the existence
of $k$ vertical points on the line $x=i$ via the binary clauses
\[ v_{i,j} \limp \lnot v_{i,j+k-1} \qquad\text{for all $j=0,\dotsc,n-i-k$.} \]
Similarly, it is possible to block the existence of $k$ horizontal points on the line $y=j$ via
the binary clauses
\[ v_{i,j} \limp \lnot v_{i+k-1,j} \qquad\text{for all $i=0,\dotsc,n-j-k$.} \]
Although a minor optimization, experimentation showed that this alternate encoding
of the vertical and horizontal non-collinearity constraints tended to be preferable
to the cardinality encodings described above.

\subsubsection{Symmetry breaking}\label{sec:symbreak}

There are three nontrivial operations that when applied to a GR($k$) walk produce another GR($k$)
walk: complement the steps (switch north steps with east steps and vice versa),
reverse the steps, and a complement + reverse combination.
We consider the paths generated by
these operations as equivalent, and to shrink the search space it is desirable to remove such
paths from the search space---assuming that \emph{up to equivalence} we don't remove
paths.  The process of adding extra constraints
that remove solutions that can be assumed without loss of generality is known
as \emph{symmetry breaking}.

The complementation symmetry is simple to break by enforcing the first step to be north, since
a Gerver--Ramsey walk without a north first step can be complemented to
make an equivalent walk with a north first step.
We encode this by adding the unit clause $v_{0,1}$ into our SAT encoding (which then
implies $\lnot v_{1,0}$).

We also tried breaking the reversal and reversal+complement symmetries, but the SAT encoding
to do this was more involved.  Ultimately, experiments revealed that the overhead
of adding more clauses into the encoding was more trouble than it was worth.  Thus, the
SAT encodings we used ignored the reversal symmetries, only
breaking the complementation symmetry via a north first step.
Exhaustive enumeration of GR($k$) walks was possible for $k\leq6$,
and after the enumeration was complete we checked for and removed GR($k$) walks
that were duplicates up to equivalence (see Section~\ref{sec:enumeration-kleq6}).

\subsubsection{Blocking extremal points}\label{sec:extremalpoints}

Points that are too close to the $x$-axis or $y$-axis can quickly be shown to never
occur in a GR($k$) walk and therefore can be blocked directly in the encoding.
In particular, because a GR($k$) walk can never take $k-1$ consecutive steps in the north direction,
a GR($k$) walk can never cross the line $y=(k-2)x+(k-1)$.  Thus, we add in the unit clauses
\[ \lnot v_{x,(k-2)x+k-1} \qquad\text{for $0\leq x< n/(k-1)-1$} \]
which blocks the points $(0,k-1)$, $(1,2k-3)$, $(2,3k-5)$, $\dotsc$\ from appearing on the path.

Similarly, a GR($k$) walk can never take $k-1$ consecutive steps in the east direction,
so a GR($k$) walk whose first step is north can never cross the line
$y=\frac{1}{k-2}(x-1)$.  Thus, we add in the unit clauses
\[ \lnot v_{(k-2)y+1,y} \qquad\text{for $0\leq y<(n-1)/(k-1)$} \]
which blocks the points $(1,0)$, $(k-1,1)$, $(2k-3,2)$, $\dotsc$\ from appearing on the path.

\subsection{At-least-$k$ conjunctive normal form}
\label{sec:knf}

It is not always convenient to write a logical expression in CNF, and a number of more expressive extensions of CNF
have been proposed.  One such extension proposed by Reeves, Heule, and Bryant~\cite{CardinalityCadical},
called \emph{at-least-$k$ conjunctive normal form} (KNF), augments CNF with constraints of the form
$l_1+\dotsb+l_s\geq k$ where $l_1$, $\dotsc$, $l_s$ are literals.  Such a constraint is known as a \emph{klause}
and is satisfied by an assignment if at least $k$ of the literals $l_1$, $\dotsc$, $l_s$ are true
under that assignment.

Klauses are by definition lower bounds, but upper bounds can also be expressed as klauses
since the lower bound $l_1+\dotsb+l_s\geq k$ is equivalent to the upper bound
$\bar l_1+\dotsb+\bar l_s\leq s-k$ where $\bar x$ denotes the negation of $x$.
Thus, we are able to express the non-collinearity constraints from Section~\ref{sec:non-collinearity}
natively using klauses.  For example, the constraint that there are at most $k-1$ points
on the line $y=x$ is represented as the klause
\[ \sum_{i=0}^{\lceil n/2 \rceil - 1} \bar{v}_{i,i} \geq \lceil n/2 \rceil - k + 1 . \]

Reeves et al.~\cite{CardinalityCadical} provide a KNF solver based on the SAT solver \CaDiCaL~\cite{CadicalPaper}
called Cardinality-{\CaDiCaL}\@.\footnote{Code available at \url{https://github.com/jreeves3/Cardinality-CDCL/}.}
Their solver is able to reason about klauses natively and incorporates a cardinality-based
propagation routine for deriving consequences of partial assignments.
They report that Cardinality-{\CaDiCaL} performs particularly well on
satisfiable instances having many large cardinality constraints.
Intuitively, cardinality-based propagation allows solving satisfiable instances faster because
it bypasses the auxiliary variables used to convert cardinality constraints into CNF in a compact way.
On the other hand, they report that for unsatisfiable instances converting klauses
into CNF cardinality constraint encodings tends to result in improved performance.
Intuitively, the auxiliary variables used in the CNF conversion
tend to be important for finding short proofs of unsatisfiability.

\subsection{Encoding the unreachability of points}
\label{sec:reachability}

Section~\ref{sec:extremalpoints} provides upper and lower bounds on GR($k$)-reachable points; in particular,
GR($k$)-reachable points always lie between the lines $y=(k-2)x+(k-1)$ and $y=\frac{1}{k-2}(x-1)$.  However,
these bounds are not tight for large $x$.  Given that if a point $(x,y)$
can be shown to be GR($k$)-unreachable then the Boolean variable $v_{x,y}$ can be fixed to false, it is
desirable to determine the reachability of as many points as possible.

The problem of determining the reachability of a point can be phrased using a variant of the
SAT encoding we've already described.  Say we want to determine if $(x,y)$ is a GR($k$)-reachable point.
We generate the SAT encoding specifying the existence of an $(x+y)$-step GR($k$) walk, except we add the additional
unit clause $v_{x,y}$ into the encoding.  The presence of $v_{x,y}$ ensures the point $(x,y)$ must appear
on the path.  If such an instance is satisfiable, the satisfying assignment will provide a GR($k$) walk from
$(0,0)$ to $(x,y)$.  Otherwise, if such an instance is unsatisfiable, this implies that $(x,y)$ is a
GR($k$)-unreachable point.

Once it is known that $(x,y)$ is GR($k$)-unreachable, the unit clause $\lnot v_{x,y}$ is included in
all larger instances specifying the existence of GR($k$) walks with more than $x+y$ steps.  Such unit clauses
help the solver, because the solver now no longer needs to consider walks passing through $(x,y)$.
In fact, we can say more: if $(x,y)$ is GR($k$)-unreachable when starting from the origin, it must also be the
case that $(x+x_0,y+y_0)$ is GR($k$)-unreachable when starting from $(x_0,y_0)$.  Thus, instances
asserting the existence of GR($k$) walks of length $n$ may also include clauses of the form
\begin{equation}
v_{x_0,y_0}\limp\lnot v_{x_0+x,y_0+y} \qquad\text{for all $(x_0,y_0)\in\Enn^2$ with $x_0+y_0<n-x-y$} \label{eq:binreachable}
\end{equation}
where $(x,y)$ is a GR($k$)-unreachable point with $x+y<n-1$.

Some care must be taken in order to use the unreachability clauses in conjunction with the symmetry breaking
described in Section~\ref{sec:symbreak}.
Recall our symmetry breaking assumes the first step is north.  If $(x,y)$ is determined to be
unreachable in a GR($k$) walk using $x+y$ steps (the first of which is north),
it follows that $(x,y)$ does not appear on all longer GR($k$) walks using our symmetry breaking (i.e., with a north first step).
However, the clauses in~\eqref{eq:binreachable} can only be added if it is known
that $(x,y)$ is unreachable from the origin in walks with \emph{either} a north or east first step.
Note that walks from the origin to $(x,y)$ with a north
first step are equivalent to walks from the origin to $(y,x)$ with an east first step.
Thus, we add clauses of the form~\eqref{eq:binreachable} when both $(x,y)$ and $(y,x)$
were determined to be unreachable from the origin in GR($k$) walks with a north first step.

\subsection{Constraint-removal heuristic}\label{sec:heuristic}

Although not always the case,
SAT solvers may perform better if redundant constraints are not used in
the encoding.  During solving, modern SAT solvers store all clauses in memory
and most of their time is spent performing constraint propagation, a task whose
running time is proportional to the number of clauses stored in memory.  Thus, reducing
the number of stored clauses often improves the performance of the solver, particularly
when the removed clauses are redundant.

In the Gerver--Ramsey collinearity problem, we observed that many of the
non-collinearity constraints described in Section~\ref{sec:non-collinearity} were
not useful in practice.  That is, many of these constraints could be removed and
the solver could still either find correct GR($k$) paths (in satisfiable instances) or
prove that no GR($k$) paths exist (in unsatisfiable instances).

Note that the technique of removing some non-collinearity constraints is
heuristic.  On the one hand, if constraints are removed from the instance
and the solver reports an UNSAT result, we know for certain that the original instance
was also unsatisfiable (as removing constraints can only \emph{increase} the number of satisfying
assignments, never decrease it).  On the other hand, if constraints are removed from
the instance and the solver reports a SAT result,
there is no guarantee that the satisfying assignment returned by the solver
is actually a GR($k$) path.  However, in such cases we can explicitly
check that the returned path has no~$k$ collinear points on it.  To do this check
efficiently, we iterate over all pairs of points on the path and ensure that the
line through the two points in the pair contains fewer than $k$ points on the path.

In practice, we found that the majority of the non-collinearity constraints
corresponded to lines having a relatively small number of points $(x,y)$
in the relevant region of $\{\,(x,y)\in\Enn^2:x+y<n\,\}$.  For example, for $k=7$ and $n=300$,
about 35\% of the constraints contain exactly 7 points in the relevant region. In practice
these constraints are unlikely to be useful, as
it is unlikely for a satisfying assignment to pass through all
7 points simultaneously.  Thus, we removed constraints corresponding
to lines with a small number of points in the relevant region.
For $k=7$ and $n\geq150$, we removed all constraints corresponding
to lines containing 16 or fewer points in the relevant region. Despite removing the majority of the non-collinearity constraints in this way (for example, about 94\% for $n=300$), 
most satisfying assignments found by the solver produced a valid GR($k$) walk, with fourteen exceptions among several hundred cases.
This heuristic also produced a
speedup in the solver's efficiency (see Section~\ref{sec:benchmarking-heuristic}).

\subsection{Parallelization}
\label{sec:c&c}

As the number of steps in the path increases, the SAT instances
tend to become more difficult.  The largest instances we solve
are so difficult that solving them using a single processor would
take an infeasible amount of time.  Thus, in order to make progress it is necessary
to exploit parallelization and have multiple processors working
on solving the SAT instance in parallel.

One of the most successful parallelization techniques in SAT solving
is known as cube-and-conquer~\cite{Heule2012CubeAndConquer}.
This technique aims to divide the search space into disjoint
subproblems of roughly balanced difficulty.  If this can be achieved, multiple processors can
solve subproblems independently, providing a speedup
proportional to the number of processors available.  The method uses what is
known as a lookahead solver to determine how to split the SAT instance
by ``branching'' on a variable in the instance---setting the variable
to true in one subproblem and false in another
subproblem.  A lookahead solver spends a significant amount of time
determining which variable is best to split on in order to split the
problem into two subproblems of roughly equal difficulty.

We use Heule's lookahead solver {\march} in our work~\cite{Heule2005}.
After a variable is selected to branch on, {\march} generates a subproblem
in which the variable is true and a subproblem in which the variable is false,
and then applies Boolean constraint propagation (i.e., derives consequences
of fixing the variable in each subproblem).  The process then repeats
recursively until a set number of cubes have been created or the subproblems have been determined
to be so easy to solve that splitting them further is no longer necessary.

A \emph{cube} is a conjunction of literals, e.g., $x_1\land \lnot x_2\land x_3$.
The above process of splitting can be viewed as generating a collection of cubes
that partition the search space, with each cube defining a single subproblem.
Each processor is provided the original SAT instance along with one or more cubes to solve.
For each cube, the
SAT solver assumes the literals in the cube are each true by adding each literal
in the cube as a unit clause.

\section{Results}\label{sec:results}

We now discuss our experimental results.\footnote{Our code is available at \url{https://github.com/aaronbarnoff/Collinear}. 
}
We start with a description of the benchmarking
we did in order to determine the effectiveness of
the encodings from Section~\ref{sec:satisfiability} (see
Section~\ref{sec:benchmarking}).
We then describe how we used our SAT encoding to enumerate
all GR($k$) walks up to $k=6$ and produce certificates
demonstrating that there do not exist GR(3), GR(4), GR(5), and GR(6) walks
with 4, 9, 29, and 97 steps, respectively (see Section~\ref{sec:enumeration-kleq6}).
Unless otherwise mentioned, experiments in Sections~\ref{sec:benchmarking},~\ref{sec:enumeration-kleq6}, and~\ref{sec:gr7reachability} were run on the Digital Research Alliance of Canada Fir cluster,
a high performance computing cluster consisting of
AMD EPYC processors, most of which run at 2.7~GHz.
The experiments of Section~\ref{sec:gr7improvement} used the Digital Research Alliance of Canada Nibi cluster with Intel Xeon processors at 2.4~GHz.
A few experiments were run on a desktop computer with an AMD Ryzen 9950X 4.3~GHz processor and 64~GiB of memory
and these are indicated separately.

\subsection{Benchmarking}\label{sec:benchmarking}

Because modern SAT solvers use a number of heuristics
(e.g., to decide which variable to branch on when solving) their solving times on the same instance
tend to vary widely.  This is especially true if the instance is satisfiable, since the solver
will stop as soon as a single satisfying assignment is found, and depending on the heuristic
choices this will sometimes happen much quicker than usual.  Unsatisfiable instances usually have
more consistent solve times (since in these cases the solver always needs to prove there are no satisfying assignments)
but even unsatisfiable instances have variance in their solve times.
To mitigate the effect of this variance, we solved each benchmark 15 times
using 15 different random seeds and report the median running time.

\subsubsection{KNF vs.\ CNF: Performance}\label{sec:knfcnfperformance}

Our first set of benchmarks explore the performance of a KNF encoding versus
a CNF encoding.  We experimented with all of the CNF cardinality encodings
supported by the Python library PySAT~\cite{PySAT}, and we found the sequential counter
encoding had the best performance for this problem, so our CNF instances used the
sequential counter encoding for the non-collinearity cardinality constraints.
The SAT solver used to solve the CNF instances was {\CaDiCaL}~\cite{CadicalPaper}, and the KNF solver
was Cardinality-{\CaDiCaL}~\cite{CardinalityCadical}.

Four satisfiable benchmarks were chosen
and four unsatisfiable benchmarks were chosen (in each case, one benchmark
had $k=6$, and the other three benchmarks had $k=7$).  The unsatisfiable instances with $k=7$
had an endpoint $(x,y)$ of the path added as a unit clause $v_{x,y}$, with the point
$(x,y)$ chosen to be GR($k$)-unreachable.
Each benchmark was solved 15 times using 15 random seeds, and the median times (in seconds)
are presented in Table~\ref{tab:cnfknf}.
{\CaDiCaL} used at most 7743~MiB of memory on the satisfiable benchmarks
and 1685~MiB on the unsatisfiable benchmarks.
For the KNF encoding, Cardinality-{\CaDiCaL} required at most 887~MiB on
the satisfiable benchmarks and 492~MiB on the unsatisfiable benchmarks.

\begin{table}
\caption{Median solve times (in seconds) across 15 trials for two encodings (a CNF encoding and a KNF encoding)
of eight different benchmarks.}
\centering
\begin{tabular}{c c c c S[table-format=5.1] S[table-format=5.1]}
$k$ & $n$ & Endpoint  & Type  & {CNF time} & {KNF time} \\ \hline
6   & 97  & ---       & SAT   & 102.5      & 25.5       \\
7   & 220 & ---       & SAT   & 2347.4     & 76.2       \\
7   & 240 & ---       & SAT   & 6802.7     & 374.6      \\
7   & 261 & ---       & SAT   & 26499.8    & 2750.0     \\ \hline
6   & 98  & ---       & UNSAT & 616.6      & 361.0      \\
7   & 122 & $(33,88)$ & UNSAT & 363.8      & 702.2      \\
7   & 151 & $(46,104)$& UNSAT & 1529.3     & 10826.6    \\
7   & 180 & $(56,123)$& UNSAT & 2842.6     & 46439.4    \\
\end{tabular}
\label{tab:cnfknf}
\end{table}

The results show that the KNF encoding tends to perform better on satisfiable instances, while the CNF
encoding tends to perform better on unsatisfiable instances, matching the observation of
Reeves et al.~\cite{CardinalityCadical}.  Thus, in our future results when we know or expect
the instance to be satisfiable we use the KNF encoding and otherwise we use the CNF encoding.

\subsubsection{Unreachable points encoding: Performance}\label{sec:reachability-performance}

The next set of benchmarks examines the performance of the unreachable point encoding
described in Section~\ref{sec:reachability}.  When solving an instance
asserting the existence of an $m$-step GR($k$) walk, if there are known
points $(x,y)$ with $x+y<m$ that are GR($k$)-unreachable, we exploit
this in our encoding.  For now, we assume the reachability of points in $\{\,(x,y)\in\Enn^2:x+y<n-1\,\}$
is known; in Section~\ref{sec:gr7reachability} we describe the
computations performed to determine the reachability of points.
As described in Section~\ref{sec:reachability}, for each
GR($k$)-unreachable point $(x,y)$,
we add the unit clause $\lnot v_{x,y}$ (stating that $(x,y)$ is not on the path)
and the binary clauses from~\eqref{eq:binreachable}.

We used the same eight benchmarks from Section~\ref{sec:knfcnfperformance}, and solved
them with and without the unreachability clauses.  Again, 15 trials were run
with 15 different random seeds and the median running time is given in Table~\ref{tbl:unreachability}.
{\CaDiCaL} used at most 970~MiB on the unsatisfiable benchmarks,
whereas Cardinality-{\CaDiCaL} used at most 597~MiB on the satisfiable benchmarks.

The results show that the unreachability clauses tend to help the solver, especially for
unsatisfiable instances, where adding the unreachability clauses improved the solver's
median running time, sometimes dramatically: for example, the solver was able to show there is no GR($7$) walk
from the origin to $(56,123)$ about 110 times faster when the unreachable point clauses were included.
For satisfiable instances, the results were less dramatic, but the unreachability clauses
still tended to improve the performance of the solver.

\begin{table}
\caption{A table summarizing solve times using our encoding
with and without the reachability clauses.
The reported times are the median time (in seconds) across 15 trials, each
using a different random seed.
Satisfiable (SAT) instances use the KNF encoding and unsatisfiable (UNSAT) instances
use the CNF encoding.}
\centering
\begin{tabular}{cccc S[table-format=4.1] S[table-format=4.1]}
$k$ & $n$ & Endpoint  & Type  & Without & With \\ \hline
6 & 97  & ---         & SAT   & 25.5   & 22.9 \\
7 & 220 & ---         & SAT   & 76.2   & 59.3 \\
7 & 240 & ---         & SAT   & 374.6  & 361.3 \\
7 & 261 & ---         & SAT   & 2750.0 & 2397.4 \\ \hline
6 & 98  & ---         & UNSAT & 616.6  & 508.7 \\
7 & 122 & $(33,88)$   & UNSAT & 363.8  & 134.4 \\
7 & 151 & $(46,104)$  & UNSAT & 1529.3  & 35.2  \\
7 & 180 & $(56,123)$  & UNSAT & 2842.6 & 25.8  \\
\end{tabular}
\label{tbl:unreachability}
\end{table}

\subsubsection{Constraint-removal heuristic: Performance}\label{sec:benchmarking-heuristic}

We now examine the performance of the constraint-removal heuristic
described in Section~\ref{sec:heuristic}.  For $k=6$ with the heuristic enabled, we ignored
all non-collinearity constraints corresponding to lines with 13 or fewer
points in the region $\{\,(x,y)\in\Enn^2:x+y<n\,\}$ and the solver was still able to prove
there are no GR($k$) paths with $n=98$ points.  For $n=97$, the solver found paths containing
$k$ collinear points, indicating that some of the ignored constraints were not redundant.
Since the $k=6$ instances were quickly solvable without the heuristic anyway,
for $k=6$ we only enabled the heuristic on the final unsatisfiable case (for $n=98$ points).

For $k=7$, we ignored all non-collinearity constraints corresponding to lines with 16 or fewer
points in the region $\{\,(x,y)\in\Enn^2:x+y<n\,\}$.  For $n\geq150$ the solver was able to
solve our previous benchmarks correctly---for the satisfiable instances, valid GR($k$) paths
were found and the unsatisfiable instances were determined to have no GR($k$) paths.
For the unsatisfiable $n=122$ benchmark, the solver found a satisfying assignment
having $k$ collinear points on the corresponding path, meaning the ignored constraints
were not redundant.  This is an indication that the heuristic works better for larger $n$
which are the instances that are the most difficult to solve.

Table~\ref{tbl:heuristic-results}
contains a summary of the results we found using our constraint-removal heuristic on
several satisfiable and unsatisfiable benchmarks. The constraint-removal heuristic significantly lowered memory usage, bringing {\CaDiCaL} down to at most 391~MiB on the unsatisfiable cases (2.5$\times$ reduction) and Cardinality-{\CaDiCaL} to at most 344~MiB on the satisfiable ones (1.7$\times$ reduction).

\begin{table}
\caption{A table summarizing solve times with
and without using our constraint-removal heuristic.
The reported times are the median time (in seconds) across 15 trials, each
using a different random seed.
Satisfiable (SAT) instances use the KNF encoding and unsatisfiable (UNSAT) instances
use the CNF encoding.
}
\centering
\begin{tabular}{cccc S[table-format=4.1] S[table-format=4.1]}
$k$ & $n$ & Endpoint  & Type  & {Heuristic Off} & {Heuristic On} \\ \hline
7 & 220 & ---         & SAT   & 59.3   & 48.4 \\%
7 & 240 & ---         & SAT   & 361.3  & 245.2 \\
7 & 261 & ---         & SAT   & 2397.4 & 1710.3 \\ \hline
6 & 98  & ---         & UNSAT & 508.7  & 395.9  \\
7 & 151 & $(46,104)$  & UNSAT & 35.2   & 26.4   \\
7 & 180 & $(56,123)$  & UNSAT & 25.8   & 18.3   \\
\end{tabular}
\label{tbl:heuristic-results}
\end{table}

\subsection{Enumeration of GR($k$) walks for $k\leq 6$}\label{sec:enumeration-kleq6}

We now describe our enumeration of all GR($k$) walks up to isomorphism for $k\leq6$.
For this, we use the basic CNF encoding described in Section~\ref{sec:encoding}
along with the unreachable point encoding of Section~\ref{sec:reachability}
(but not the constraint-removal heuristic).

For a fixed $k$, an incremental approach was used
to enumerate all GR($k$) walks.  A variable~$m$ was used to control
the number of steps in the walk.  Given $k$ and $m$, the enumeration
of all $m$-step GR($k$) walks was accomplished by generating
$m+1$ SAT instances, one for each ending point $(x,m-x)$ with
$x\in\{0,1,\dotsc,m\}$.  For each such point, a version of {\CaDiCaL} that exhaustively
finds \emph{all} solutions of a SAT instance
was used to
find all GR($k$) walks ending at the point $(x,m-x)$.
Once all GR($k$) walks with $m$ steps were known they were filtered
up to isomorphism using the equivalence operations described in Section~\ref{sec:symbreak}.

The equivalence filtering was done by converting each GR($k$) walk into a normal form
in such a way that all equivalent walks produce the same normal form.  To do this, each
$m$-step walk is represented as a binary string of length $m$, with \1 representing a north
step and \0 representing an east step.  The normal form of a GR($k$) walk is the
walk whose binary string is the lexicographically greatest of all walks in the same
equivalence class.

Once all $m$-step GR($k$) walks had been determined, if there was at least one
$m$-step GR($k$) walk then $m$ was incremented by 1 and the enumeration process was repeated.
Eventually, no $m$-step GR($k$) walks were found.  Once this happens, we have that
$a(k)=m$ and the length of the maximal GR($k$) walk(s) is $m-1$.
For example, we found that $a(4)=9$, $a(5)=29$, and $a(6)=97$, confirming
the results of Shallit~\cite{A231255}.
The computations for $k=4$ and $k=5$ completed in under a second of CPU time,
while the $k=6$ case required $118{,}990$ seconds.

Visual heatmap diagrams depicting our GR($k$) walk enumeration results for $k=4$ to $k=6$
are given in Figure~\ref{fig:heatmap}.  These diagrams visually show how many GR($k$)
walks (in normal form) exist from the origin to a point $(x,y)$ in the plane.
Only walks in normal form are counted, so it is possible
a point has walks to it when both the point below and the point to the left do not.
For example, $(9,4)$ is reachable using a GR(5) walk, but $(9,3)$
and $(8,4)$ are not reachable by walks in normal form.  $(8,4)$ is GR(5)-reachable, but only using a walk
which in normal form ends at $(4,8)$.

Once the value of $a(k)=m$ has been determined, we produce unsatisfiability certificates demonstrating the nonexistence
of $m$-step GR($k$) walks.\footnote{The certificates are available at \url{https://doi.org/10.5281/zenodo.17645678}.}
When combined with the known GR($k$) walks of length $m-1$ (which are easy to check for correctness)
this provides a certificate that $a(k)=m$.
The unsatisfiability certificates are in the DRAT format~\cite{drat}, a standard format for unsatisfiability proofs in modern SAT solving.
A DRAT proof consists of a list of clause additions and deletions.
Added clauses are redundant with respect to the current formula, meaning they may be added without changing the formula's satisfiability.
Deletions discard clauses when they are deemed to not be useful (in order to keep the memory footprint
of the solver manageable).  The last added clause in the DRAT proof is the empty clause.  The empty clause having
the same satisfiability status as the original formula proves
the original formula to be unsatisfiable, since no truth assignments satisfy the empty clause.
The proofs were validated with the proof verifier \textsc{DRAT-trim}~\cite{drattrim}.
This tool verifies that each clause addition is indeed a logical consequence of previously derived clauses.
Thus, the SAT solver itself does not need to be trusted; only the proof verifier---a much simpler
piece of software---needs to be trusted.

\begin{figure}
  \centering
    \begin{subfigure}{0.2225\linewidth}
        \includegraphics[width=\linewidth]{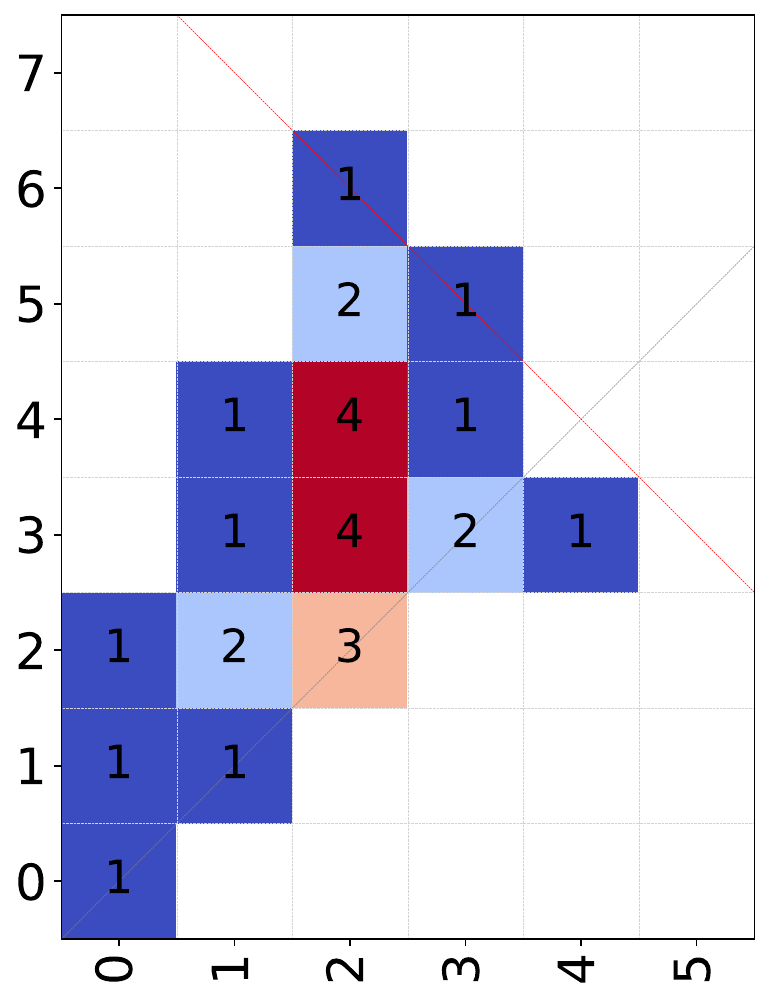}
        \caption{$k=4$}
  \end{subfigure}
  \begin{subfigure}{0.29\linewidth}
        \includegraphics[width=\linewidth]{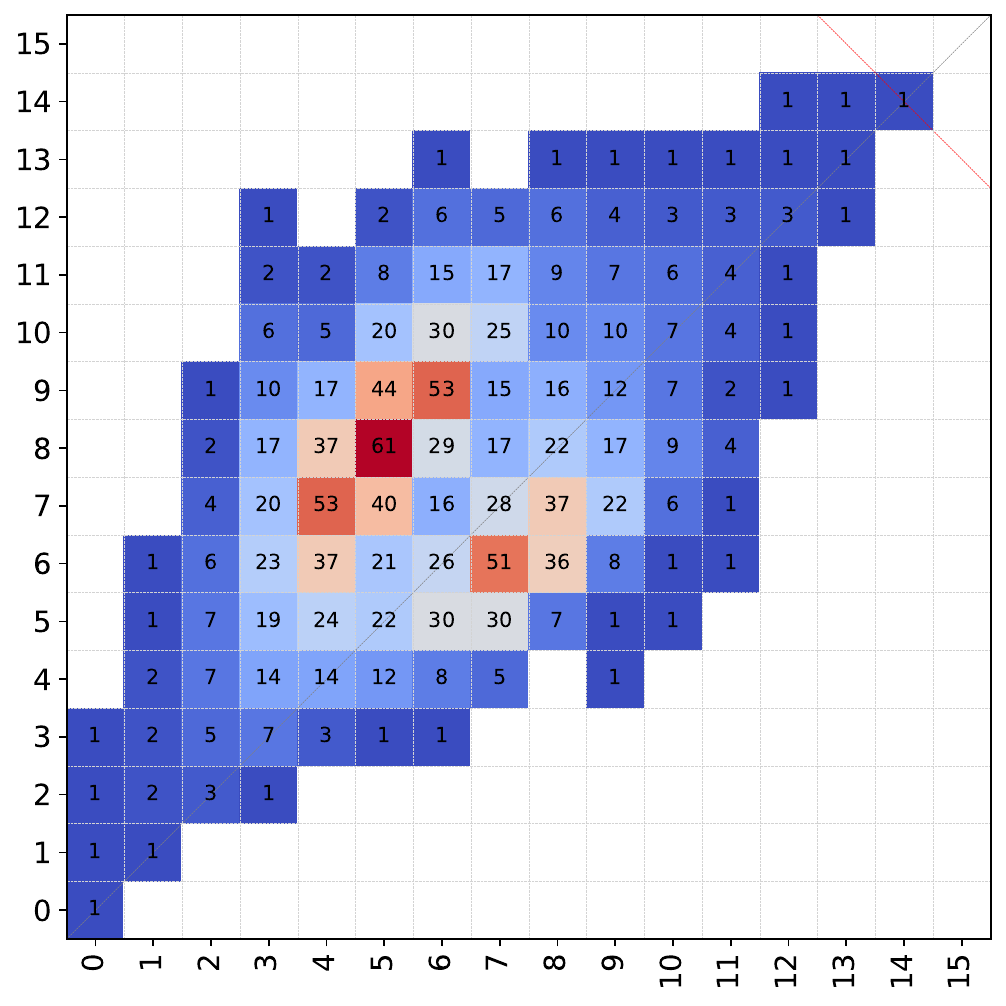}
        \caption{$k=5$}
  \end{subfigure}
  \begin{subfigure}{0.75\linewidth}
    \includegraphics[width=\linewidth]{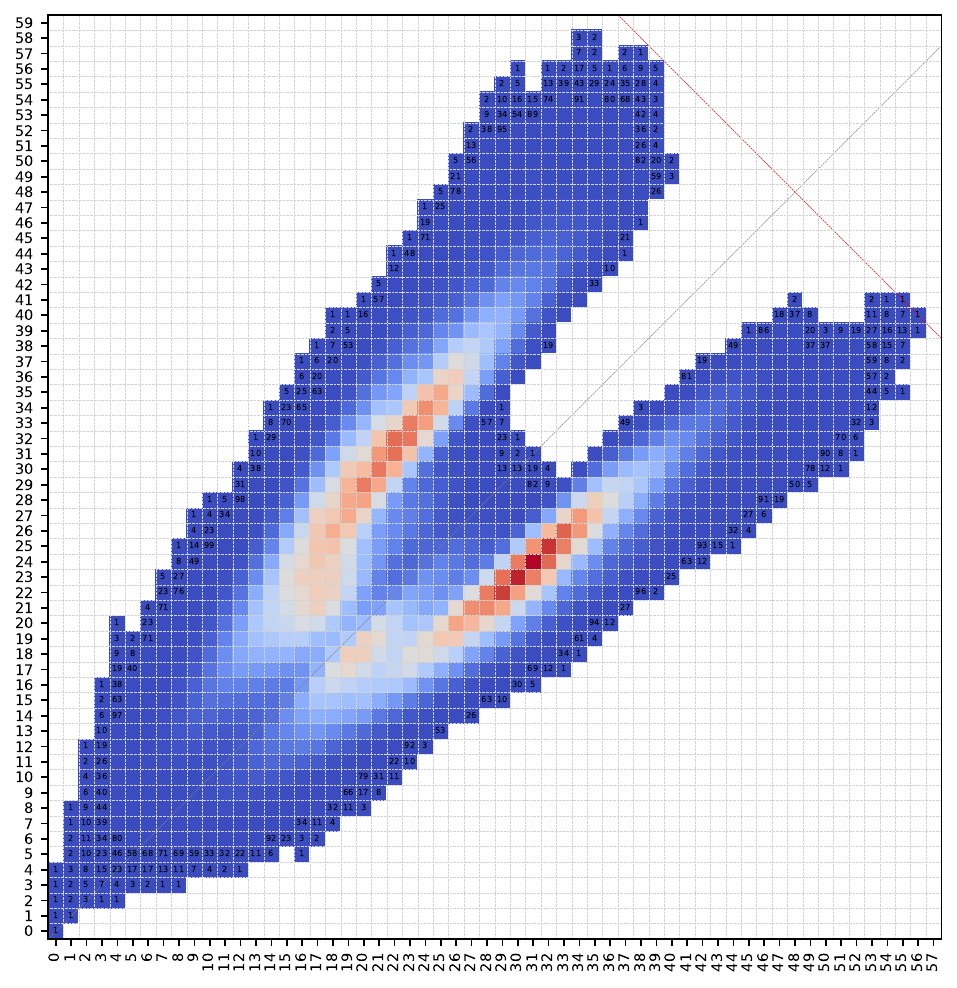}
    \caption{$k=6$}\label{fig:sub-heatmap}
  \end{subfigure}\hfill

  \caption{Exhaustive enumeration of GR($k$) walks for $k=4$ to $k=6$
  in normal form.
  Color intensity indicates the number of distinct walks reaching each point,
  with red indicating the greatest number of paths
  (the deepest red for $k=6$ corresponding to $324{,}571$ walks),
  blue indicating the least number of paths, and white indicating
  the point is unreachable using a GR($k$) walk in normal form.
  The red antidiagonal line corresponds to $y=(a(k)-1)-x$.
  }
  \label{fig:heatmap}
\end{figure}

\begin{table}
\caption{Results for proving $a(k)\leq m$ for $3 \leq k \leq 6$.
{\CaDiCaL} was used for solving and proof generation and \textsc{DRAT-trim} was used for proof verification.
Solving and verification times are given in seconds.
}
\centering
\begin{adjustbox}{max width=\textwidth}
\begin{tabular}{c c c c c}
$k$ & $m$ & Solve time & Proof size & Verification time \\
\hline
3 & 4  & $<1$ sec   & $<1$ KiB   & $<1$ sec \\
4 & 9  & $<1$ sec   & $<1$ KiB   & $<1$ sec \\
5 & 29 & $<1$ sec   & \phantom023 KiB     & $<1$ sec \\
6 & 97 & 311 sec    & 583 MiB    & 516 sec  \\
\end{tabular}
\end{adjustbox}
\label{tab:k6res}
\end{table}

Table~\ref{tab:k6res} summarizes the running times of the proof generation
and proof verification steps
for $3 \leq k \leq 6$, and these were done on the Ryzen machine.
The proof that $a(6)\leq97$ was generated by {\CaDiCaL} in 311 seconds and
was verified by \textsc{DRAT-trim} in 516 seconds.
This nonexistence certificate provides more trust
when compared to a traditional search program---because
a bug in a search program could cause GR(6) walks to be missed,
and there is no way to tell after-the-fact if there are no certificates that can be examined
for correctness.  However, for the purposes of double-checking and runtime comparison, we
wrote a custom backtracking search program in C++ that we used to search for
GR(6) walks of length 97, and this search took 2344 seconds on the Ryzen machine to confirm
that there are no GR(6) walks of length 97 (i.e., 7.5$\times$ slower than the SAT solver).
Although with more work the speed of the backtracking search program could likely be improved,
this is an indication that not only are SAT solvers more trustworthy
than custom search, they can also be more efficient.

\subsection{Reachability bounds for GR(7) walks}\label{sec:gr7reachability}

As explained in Section~\ref{sec:reachability}, if a point $(x,y)$ is known to be
GR($k$)-unreachable then we can add clauses encoding the unreachability of $(x,y)$,
and such clauses were shown to be helpful in Section~\ref{sec:reachability-performance}.
In Section~\ref{sec:enumeration-kleq6}, we determined all GR($k$)-reachable points
for $k\leq6$.  For $k=7$, the difficulty of the problem prevented us from determining
all GR(7)-reachable points.  However, we were able to determine upper and lower
reachability bounds for GR(7) walks of up to 267 steps.

\begin{figure}
    \centering
    \includegraphics[width=\linewidth]{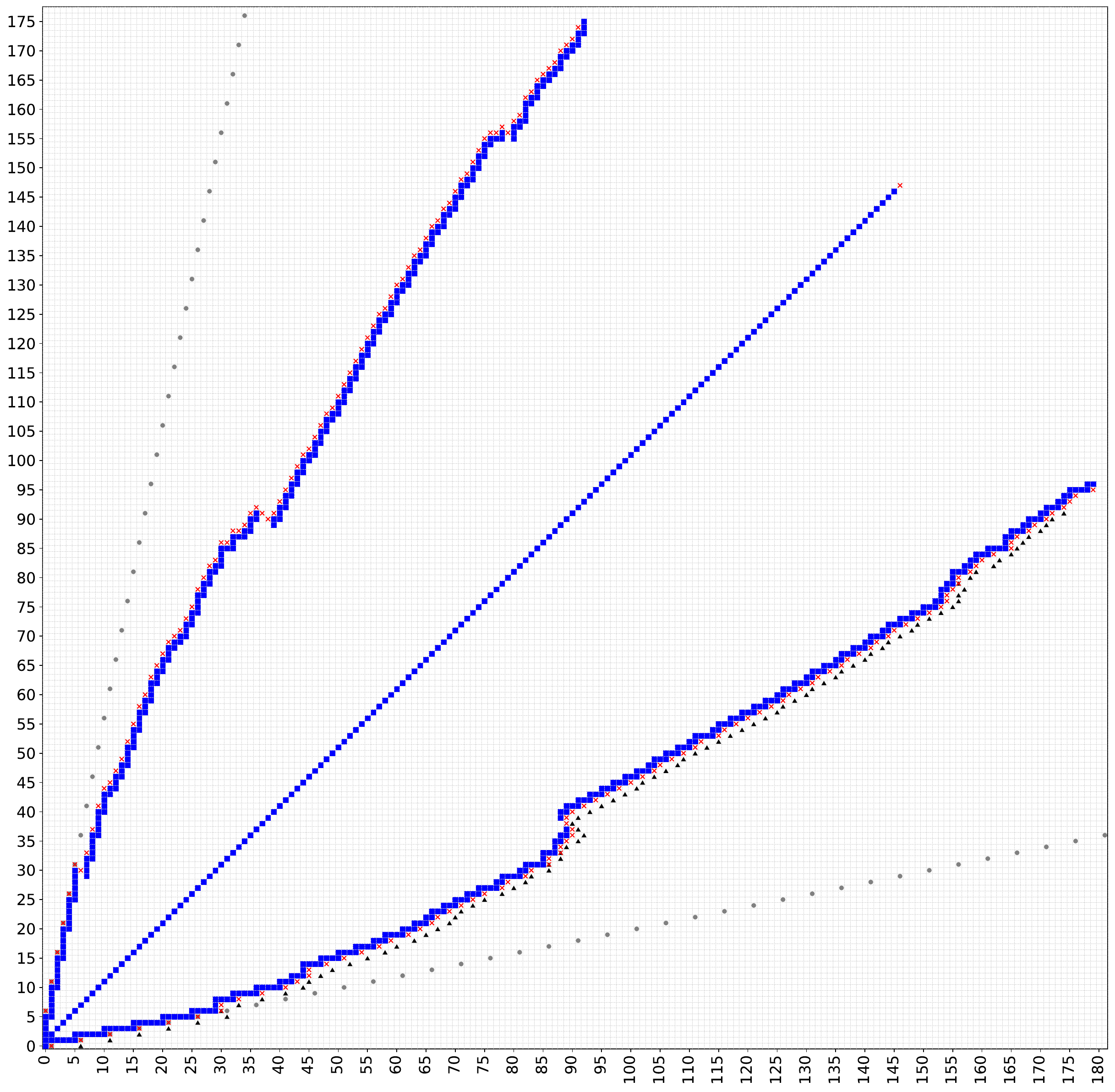}
    \caption{A partial reachability diagram for $k=7$.
    Blue squares are GR(7)-reachable points, and red crosses are GR(7)-unreachable points
    from the origin using a north first step.
    The black triangles show the reflected upper unreachability bound, and gray circles show the extremal bounds from Section~\ref{sec:extremalpoints}.
    }
    \label{fig:bounds7}
\end{figure}

A diagram showing the upper and lower bounds we found is provided in Figure~\ref{fig:bounds7}. 
These bounds were computed incrementally starting from the origin and using no symmetry breaking
except assuming the first step is north.  For the upper bound,
when a point $(x,y)$ was found to be reachable the point to solve was updated to
$(x,y+1)$ and the process restarted.  Conversely, if the point $(x,y)$
was found to be unreachable the point to solve was updated to $(x+1,y-1)$.
For the lower bound, when a point
$(x,y)$ was found to be reachable the next point to solve was set to $(x+1,y)$,
and if the point $(x,y)$ was found to be unreachable the next point to solve was set to
$(x-1,y+1)$.

In order to determine the upper and lower GR(7)-reachability bounds, we must
solve both satisfiable and unsatisfiable instances.  Furthermore, it is not
known in advance which instances are satisfiable and which are unsatisfiable.
A priori, it is not clear whether to use the CNF or KNF encoding, so we tried both
CNF and KNF, as well as a third ``hybrid'' mode of Cardinality-{\CaDiCaL}\@.  The hybrid mode switches between
solving with clauses and klauses, spending approximately half the time doing
traditional Boolean constraint propagation (i.e., operating on clauses)
and the other half using special cardinality-based propagation (i.e., operating
on klauses).  For the purposes of benchmarking,
we solved all satisfiable and unsatisfiable reachability instances on the
upper and lower boundary corresponding to GR(7) walks with $n\leq180$ points.
Each instance was solved 15 times,
using 15 random seeds for each instance, and the results are given in Table~\ref{tbl:cnfknfUB}.
The constraint-removal heuristic was not used for benchmarking, but it was used to determine the points in Figure~\ref{fig:bounds7}.  In twelve instances the satisfying assignment produced an invalid GR($k$) walk
due to the constraint removal heuristic.  In each of these cases the solver was rerun and a
valid GR($k$) walk was found on the next run of the solver.

\begin{table}
\caption{Performance of the CNF ({\CaDiCaL}), KNF (Cardinality-{\CaDiCaL} with {\tt ccdclMode=0}),
and Hybrid (Cardinality-{\CaDiCaL} with {\tt ccdclMode=1}) modes for computing all reachable (SAT)
and unreachable (UNSAT) boundary points across instances with $n\leq180$.
Times are reported as the total solve times (in seconds),
with each instance solved 15 times (each time with a different random seed),
and the median solve time used in the total.
}
\begin{adjustbox}{max width=\textwidth, center}
\begin{tabular}{c S[table-format=4.1] S[table-format=5.1] S[table-format=5.1]}
{Type} & {CNF}   & {KNF}     & {Hybrid} \\
\hline
SAT    &  2601.6 &   2261.6  &   8151.3  \\
UNSAT  &  4268.5 &  35295.1  &   7063.0  \\
Total  &  6870.1 &  37556.7  &  15214.2  \\
\end{tabular}
\end{adjustbox}%
\label{tbl:cnfknfUB}
\end{table}

The results show that KNF performed better on satisfiable instances and CNF
performed better on unsatisfiable instances.  However, the satisfiable
instances were unknown in advance, so CNF was a better choice
overall, given that KNF performed poorly on unsatisfiable instances and CNF outperformed the hybrid mode
on both satisfiable and unsatisfiable instances.  Thus,
the reachability of the upper and lower bounds in Figure~\ref{fig:bounds7}
was computed using the CNF encoding.  Overall, it took 
375.05 CPU days
to determine the reachability of the upper and lower boundaries in Figure~\ref{fig:bounds7}.
The most difficult single instance that was successfully solved
was the point $(176,94)$.
The solver took 63.5 days
and used up to 5.5~GiB of memory to determine $(176,94)$ was GR(7)-unreachable.

Given the shape of the GR(6)-reachability diagram in \Cref{fig:sub-heatmap},
we suspect there are GR(7)-unreachable points on and around the midline $y=x$ taking significantly
fewer than $a(7)$ steps to reach.  We made an effort to find such unreachable points by
using {\CaDiCaL} to determine the GR(7)-reachability of points along the line $y=x+1$.
Without using parallelization, the last point we were able to successfully determine the
reachability of was $(138,139)$.  A GR(7) walk to $(138,139)$ was found in
10.4 hours using the CNF encoding.
The CNF encoding was able to solve instances for larger $n$
than the KNF encoding;
perhaps an indication that the instances are becoming ``closer'' to unsatisfiable
in the sense that fewer satisfying assignments are present.  Using the KNF encoding,
the solver was unable to determine the reachability of points with $n>270$ and
on the line $y=x+1$ using a week of compute time. 

We incorporated parallelization in order to solve midline reachability instances with $n\geq280$.
We split the SAT instances into subinstances by enumerating all
points on lines of the form $y = sn - x$,
where~$s$ is chosen to be a simple slope such as $\frac{1}{2}$.
Each subinstance fixes one point on the line to true and the remaining
points on the line to false.
Using this approach, the instance with $n=294$ corresponding to the final point $(146,147)$
was determined to be GR(7)-unreachable (assuming an initial north step).
This computation required a two-stage splitting process.
The instance was first split by selecting all possible ways
of selecting a point $(x_0,y_0)$ on the line $y=\frac{1}{3}n-x$,
and a point $(x_1,y_1)$ on the line $y=\frac{2}{3}n-x$.
Subinstances leading to trivially unsatisfiable instances were disregarded, such as
when $(x_0,y_0)$ was GR(7)-unreachable, or when $(x_1,y_1)$ was unreachable from $(x_0,y_0)$.
This resulted in 1752 subinstances, corresponding to all simultaneously feasible
choices of $(x_0,y_0)$ and $(x_1,y_1)$.
Each of the 1752 subinstances ran for 7 days on the Nibi cluster,
and afterwards 213 subinstances remained unresolved.
The unsolved instances were split a second time by selecting all
points $(x_2,y_2)$ on the line $y=\frac{1}{2}n-x$ that were GR(7)-reachable from
both $(x_0,y_0)$ and $(x_1,y_1)$ simultaneously.
The second split produced 4554 subinstances, all of which were determined to be unsatisfiable in under 7 days.
The total running time across all subinstances was about $18.0$ CPU years.

Note the final GR(6)-reachable point on the line $y=x+1$
is $(30,31)$, and north--east walks to this point contain $n=62$ points.
The maximal GR(6) walk contains $n=97$ points,
so $a(6)$ is about 56\% larger than the number of points on the longest
GR(6) walk ending on the line $y=x+1$.  If this was also the case
for $k=7$, we would expect $a(7)$ to be around $457$.

\subsection{Searching for long GR(7) walks}\label{sec:gr7improvement}

The difficulty of the SAT instances in the Gerver--Ramsey collinearity problem with $k=7$
prevented us from determining the exact value of $a(7)$.  The SAT instances asserting
the existence of a GR(7) walk with $n$ points were feasible to solve without parallelization
up to around $n=300$.  In order to go farther, we tried several strategies of incorporating
parallelization.

The simplest parallelization strategy is simply to run multiple independent copies of the
solver on the same instance, with the only change being the random seed passed to Cardinality-{\CaDiCaL}\@.  Setting different
random seeds prevents the solver from making the same choices each time, permitting different parts
of the search space to be explored.  For the GR(7) instance with $n=305$ points, we used 150 random seeds
and ran each instance of the solver for three days.  Twenty-six of the 150 solver instances
found 305-point GR(7) walks, and all GR(7) walks found were distinct.

Ultimately, we had better results employing the cube-and-conquer parallelization strategy
described in Section~\ref{sec:c&c} and creating cubes with the lookahead solver {\march}.
The cube-and-conquer paradigm is typically used on unsatisfiable
SAT instances.  However, in our case we are aiming to find long
GR(7) walks, and therefore are looking to find satisfying assignments for instances for as large
$n$ as possible.  Thus, even if {\march} does its job of partitioning the SAT instance into
subinstances that are roughly of equal difficulty, there is no guarantee that this partition
will be useful for finding satisfying assignments.
For example, {\march} could yield subinstances for which all but one are unsatisfiable---this would limit
the benefit of using cube-and-conquer if the goal is to find a satisfying assignment.
We generated 150 cubes using {\march}'s
default parameters and none of the subinstances were found to be satisfiable after 3 days.
Examining the cubes produced, most of the literals in the cubes corresponded to points
that were close to the lower boundary and consequently most subinstances focused on searching
for GR(7) walks close to the lower boundary---not optimal for finding long GR(7) walks.

By modifying {\march} to branch on variables corresponding to points around $(70,70)$,
we computed two other sets of 150 cubes each.
Altogether, including the
GR(7) walks found by the random seed approach and a fourth set
of cubes described below, we found 125 distinct
305-point GR(7) walks (discarding two satisfying assignments
that were invalid due to the constraint removal heuristic).
A heatmap of the points visited by these walks is
provided in Figure~\ref{fig:n305_heatmap}.
Streamlining unit clauses were included in these instances to enforce that the path
did not stray more than 25 points diagonally from the line $y=x+1$.
This streamlining was added after preliminary experimentation
found twenty GR(7) walks with $n\geq310$ points and
all these walks did not stray more than 25 points from $y=x+1$.

\begin{figure}
    \centering
    \includegraphics[width=0.5\linewidth]{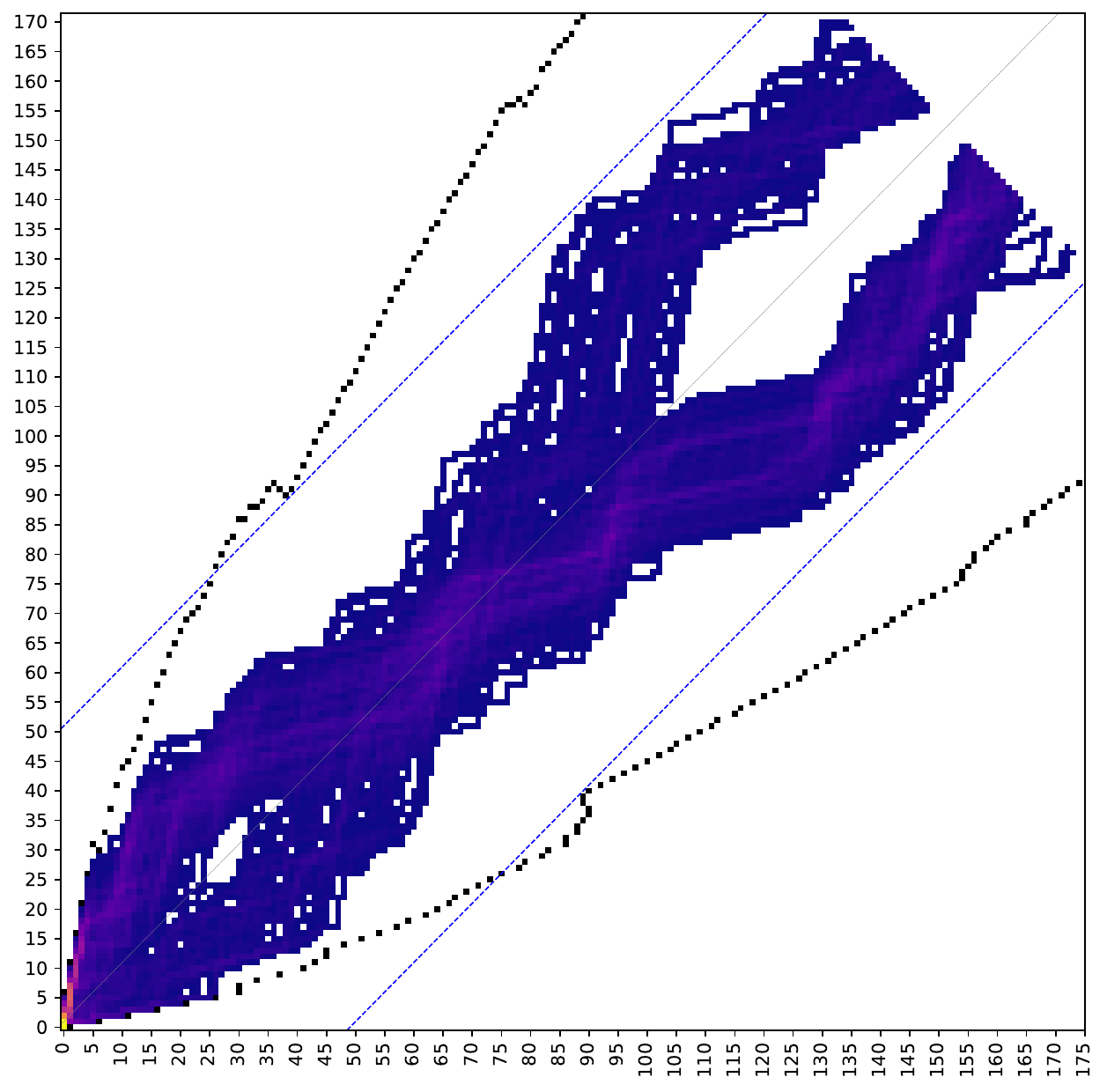}
    \caption{Heatmap of points visited by 125 distinct 305-point walks found using parallelization.
    Also included in the plot is the midline $y=x+1$ as well as the lines $y=x+1\pm50$.
    } 
    \label{fig:n305_heatmap}
\end{figure}

A fourth set of cubes ultimately led to the longest GR(7)
walk we found.  They were produced using a SAT instance not containing the streamlining unit clauses and
with some boundary points mistakenly
encoded incorrectly.  Some cubes became trivially unsatisfiable once the instance was
corrected and the streamlining unit clauses were added, leaving 133 useful cubes.
Despite this, these cubes ultimately resulted in the longest GR(7) walks we found.
A summary of the GR(7) walks found with this set of cubes is
provided in Table~\ref{tbl:k7results}.
The two distinct solutions
for $n=323$ are visually shown in Figure~\ref{fig:solutions_323}.
Counting the total time spent across all 133 cubes,
these solutions took 346.0 CPU days to find.

\begin{table}
\caption{
Summary of the GR(7) walks found using the best performing
set of cubes with the streamlining technique.
There were 133 non-trivial cubes and each cube was used
to produce a KNF instance on which Cardinality-{\CaDiCaL} was run for 72 hours.
Times are in seconds.}
\centering
\begin{adjustbox}{max width=\textwidth}
\begin{tabular}{c c c c c}
$n$ & Solutions & Min & Max & Median  \\
\hline
323 & 2  & 43762.2    & 59735.7 & 51748.9 \\
317 & 7     & 7962.2 & 88252.3 & 54885.6 \\
311 & 19   & 2100.1 & 90337.3 & 53017.2 \\
305 & 38   & 334.3 & 104144.6 & 51792.3 \\
\end{tabular}
\end{adjustbox}
\label{tbl:k7results}
\end{table}

\begin{figure}
  \centering
    \begin{subfigure}{0.49\linewidth}
        \includegraphics[width=\linewidth]{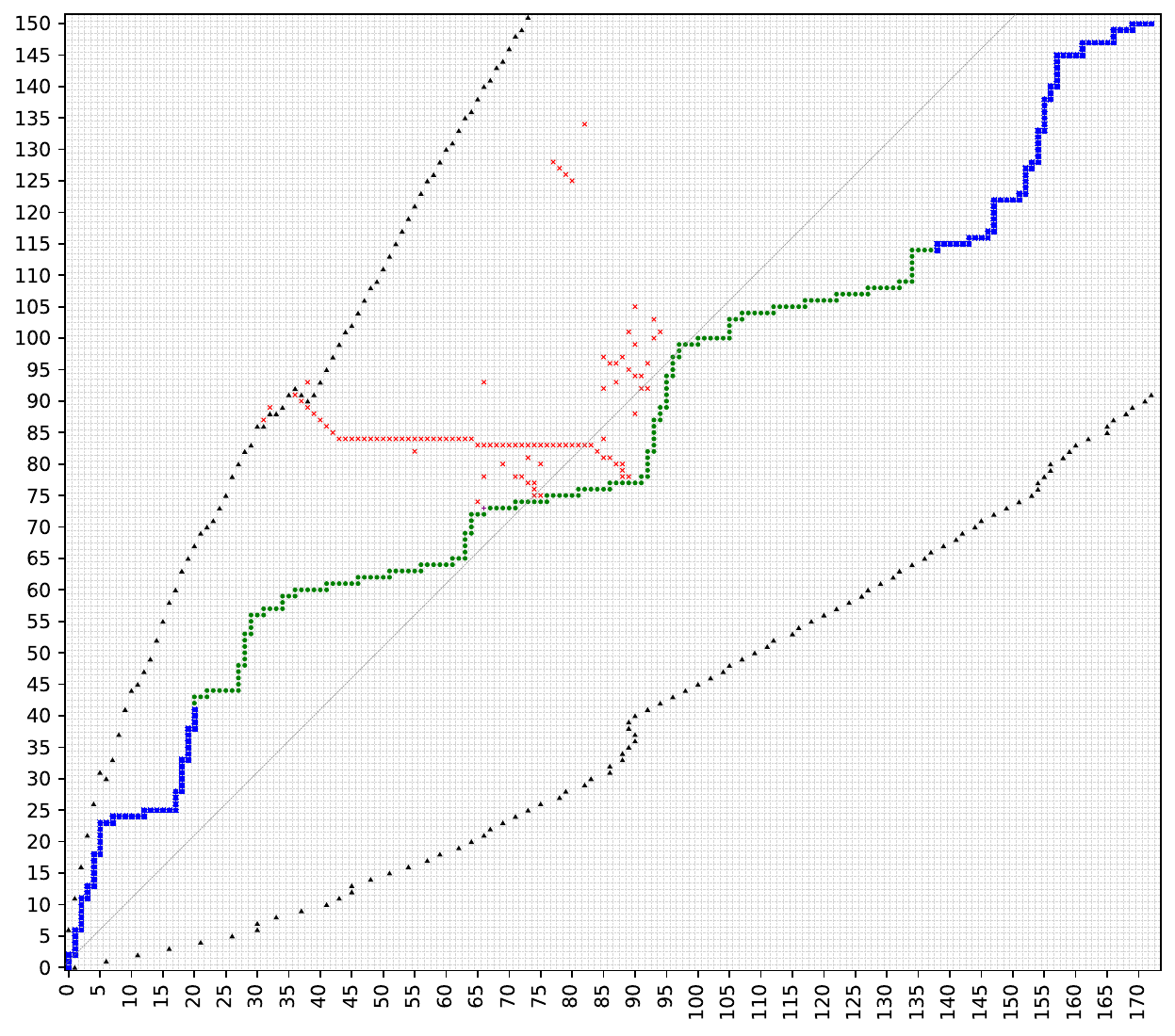}
  \end{subfigure}
  \begin{subfigure}{0.49\linewidth}
        \includegraphics[width=\linewidth]{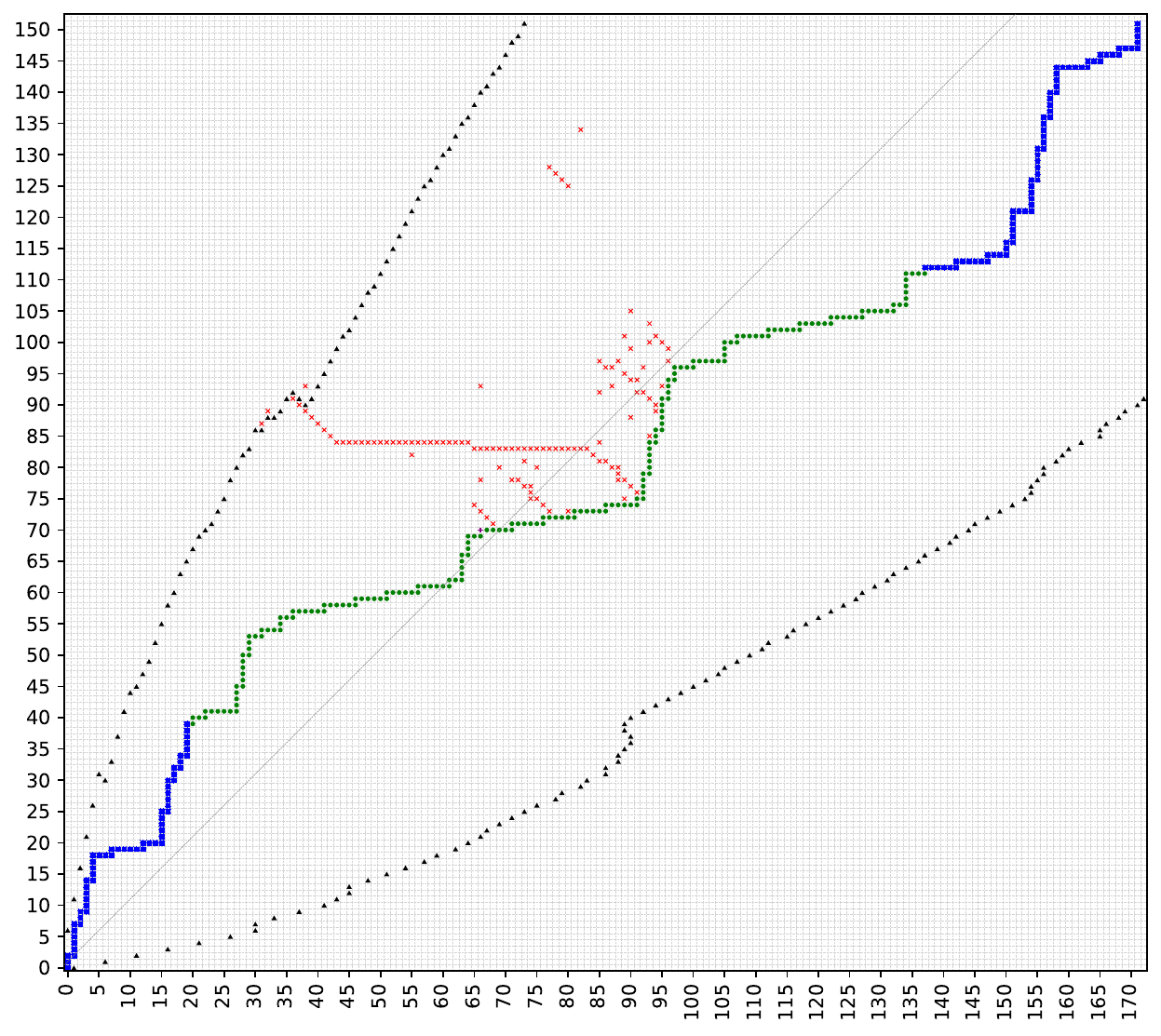}
  \end{subfigure}
\hfill
  \caption{Two 323-point GR(7) walks, with their associated cubes. The positive literal in the cube is shown as a purple plus marker, and the negative literals as red crosses.
  The upper and lower GR(7) unreachable boundary points are shown as black triangles.
  The path points shown as green circles are common to both GR(7) walks. 
  }
  \label{fig:solutions_323}
\end{figure}

An examination of the two 323-point GR(7) walks found using cube-and-conquer revealed that a
188-step component was shared between the two walks.  Given this, we ran additional searches to see if the common subpath
could be extended to GR(7) walks with more than 323 points.
Ultimately, up to isomorphism we found ten 328-point GR(7) walks containing this subpath
or its complement via an exhaustive search with {\CaDiCaL} in 40,931 seconds.
Using 6.6 CPU days on the Ryzen desktop computer, {\CaDiCaL} was also able to show that no 329-point GR(7) walk exists containing the common subpath or its complement,
even with 20 points removed from both ends of the common subpath.

Finding extensions of a given subpath was accomplished using an extension of our SAT encoding.
In addition to the variables and constraints given in Section~\ref{sec:encoding}, we also add new variables
$r_i$ representing that the $i$th step of the walk is east (where $0\leq i<m$).
If both $(x,y)$ and $(x+1,y)$ are on the path, the $(x+y)$th step
is eastward.  Similarly, if $(x+1,y)$ is on the path and the $(x+y)$th step
was east, the previous point was $(x,y)$.  Thus, we use the clauses
\[
(v_{x,y} \land v_{x+1,y}) \limp r_{x+y} \;\text{ and }\; (v_{x+1,y} \land r_{x+y}) \limp v_{x,y} \qquad
\text{for all $x\geq0$, $y\geq0$, and $x+y<n-1$.}
\]
When $r_i$ is false, this represents that the $i$th step was north.  This case
is handled similarly, using the clauses
\[ (v_{x,y} \land v_{x,y+1}) \limp \lnot r_{x+y} \;\text{ and }\; (v_{x,y+1} \land \lnot r_{x+y}) \limp v_{x,y}
\qquad \text{for all $x\geq0$, $y\geq0$, and $x+y<n-1$.}
\]

Say that $B\coloneqq b_0b_1\ldots b_{\ell-1}$
is a binary string (with \1 denoting a north step) representing
the $\ell$-step path that we want to extend.
In other words,
the steps in $B$ must appear
as a subpath in every GR($k$) walk produced by a satisfying assignment.
We also introduce the variables $s_i$ representing that the subpath $B$ starts
immediately following the $i$th step (where $0\leq i<n-\ell$).  If the subpath $B$ starts
after the $i$th step, that means $r_{i+j}$ is true exactly when $b_j=\0$ for $0\leq j<\ell$.
Thus, we use the conjunction of clauses
\[
\bigwedge_{\substack{0\leq j<\ell\\ b_j=\0}} (s_i \limp r_{i+j})
\qquad\text{and}\qquad
\bigwedge_{\substack{0\leq j<\ell\\ b_j=\1}} (s_i \limp \lnot r_{i+j})
\qquad\text{for all $0\leq i<n-\ell$}
\]
to encode that the path $B$ appears after the $i$th step when $s_i$ is true.
Then we can enforce that the path $B$ appears somewhere in the GR($k$) walk
by the clause $s_0\lor s_1\lor\dotsb\lor s_{n-1-\ell}$, which says that the subpath $B$ must start
somewhere in the path.

This encoding determined that the
188-step common subpath in our two 323-point GR(7) walks can be extended
to GR(7) walks with up to 328 points, but no more. 
One of the 328-point GR(7) walks we found is visually shown in Figure~\ref{fig:328}. The binary sequence of steps in the walk is
\begin{gather*}
\text{\scriptsize\texttt{1100000100000100010000010001011111011111001000001000001001000001011011111000010000010001101110011011111011111}} \\[-1.5ex]
\text{\scriptsize\texttt{0111110111110111110110000100011011111011111011111011111011111010000100000100100000100010011101111100011011111}} \\[-1.5ex]
\text{\scriptsize\texttt{0111110111110111110111110110000011101111101111101110010000011100000100000100000100001000011111011011111000100}}\rlap{.}
\end{gather*}
By definition, there are necessarily no lines passing through 7 points on this walk,
but there are 196 lines passing through 6 points on the walk
and these lines are also drawn in Figure~\ref{fig:328}.

\begin{figure}
\begin{center}
\includegraphics[width=\textwidth]{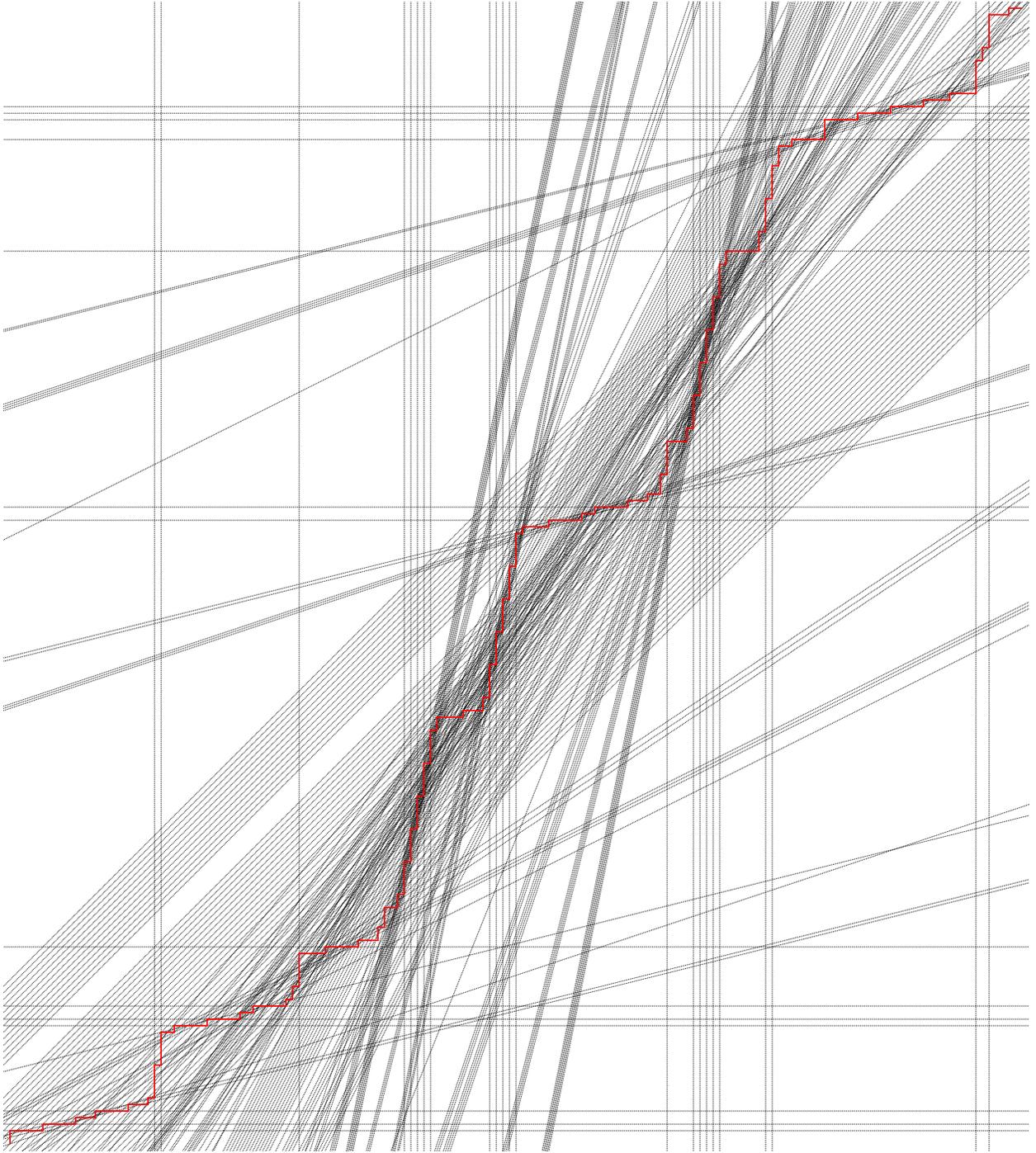}
\end{center}
\caption{A visual depiction of a 328-point GR(7) walk found by our approach, along with all 196 lines passing
through 6 lattice points on the walk.}
\label{fig:328}
\end{figure}

\section{Conclusion}\label{sec:conclusion}

In this paper we devised a satisfiability-based approach for studying the Gerver--Ramsey collinearity problem
on north--east lattice paths.  As a result of our work we enumerated all maximal GR($k$) walks for $k\leq6$
and made progress on the problem for $k=7$, improving the longest known GR(7) walk from 260 steps to 327 steps.
Although we were unsuccessful in finding a maximal GR(7) walk, we hope that the introduction of SAT solving
on this problem leads to more progress and ultimately the determination of the value of $a(7)$.

In addition to determining values of $a(k)$ for larger $k$, there are a number of
related problems that may be of interest.  One variant would be to generalize the allowed steps in the walk.
In this paper we have always assumed a step set of $\{(1,0),(0,1)\}$, but
Gerver and Ramsey show that regardless of the step set $S\subset\Zee^2$ and the value of $k$,
sufficiently long $S$-walks must always contain $k$ collinear points.

Another interesting variant would be to consider a three-dimensional variant of the problem.
Say $b(k)$ denotes the number of points in the longest lattice path with steps in
$\{(1,0,0),(0,1,0),(0,0,1)\}$ avoiding $k$ collinear points.  In contrast with the two-dimensional
case, $b(k)$ may be infinite.
Gerver and Ramsey note that $b(3)=9$, but they also prove that $b(5^{11}+1)=\infty$ by construction
of an infinite walk $W\subset\Enn^3$ having at most $5^{11}$ collinear points.
Moreover, they conjecture that $W$ actually has at most three collinear points,
which would imply $b(k)=\infty$ for all $k\geq4$.

Recently, Lidbetter~\cite{Lidbetter2024} determined their conjecture to be false by finding
six collinear points in $W$.  He also showed that $W$ does not contain 189 collinear points, and
as a consequence proves $b(k)=\infty$ for all $k\geq189$.
Even though Gerver and Ramsey's conjecture that $W$ avoids four collinear points
was wrong, it may still be the case that
$b(4)=\infty$ because another infinite walk may avoid four collinear points.
The 41st step of $W$ is the first creating
four collinear points, so $b(4)\geq41$.  We are not aware of a better bound on $b(4)$,
and perhaps a SAT approach could be used to improve this bound
or even determine $b(4)$, if it happened to be finite.

\section*{Acknowledgements}

We thank Joseph Reeves for answering a question about configuring Cardinality-{\CaDiCaL} and for
providing a KNF to CNF converter used in our work.
We also thank the anonymous referee for their useful comments which improved the paper.

\bibliographystyle{adamjoucc}
\bibliography{paper}

@article{GerverPaper,
  title={Long walks in the plane with few collinear points},
  author={Joseph L. Gerver},
  journal={Pacific Journal of Mathematics},
  year={1979},
  volume={83},
  number={2},
  pages={349--355},
  doi={10.2140/pjm.1979.83.349}
}

@article{GerverRamsey1979,
  author       = {Gerver, Joseph Leonide and Ramsey, Lawrence Thom},
  title        = {On Certain Sequences of Lattice Points},
  journal      = {Pacific Journal of Mathematics},
  volume       = {83},
  number       = {2},
  pages        = {357--363},
  year         = {1979},
  doi          = {10.2140/pjm.1979.83.357},
}

@InProceedings{Heule2012CubeAndConquer,
author="Heule, Marijn J. H.
and Kullmann, Oliver
and Wieringa, Siert
and Biere, Armin",
editor="Eder, Kerstin
and Louren{\c{c}}o, Jo{\~a}o
and Shehory, Onn",
title="Cube and Conquer: Guiding {CDCL} {SAT} Solvers by Lookaheads",
booktitle="Hardware and Software: Verification and Testing",
year="2012",
publisher="Springer Berlin Heidelberg",
address="Berlin, Heidelberg",
pages="50--65",
doi="10.1007/978-3-642-34188-5_8",
volume="7261",
series="Lecture Notes in Computer Science"
}

@InProceedings{totalizer,
author="Bailleux, Olivier
and Boufkhad, Yacine",
editor="Rossi, Francesca",
title="{Efficient {CNF} Encoding of Boolean Cardinality Constraints}",
booktitle="Principles and Practice of Constraint Programming -- CP 2003",
year="2003",
publisher="Springer Berlin Heidelberg",
address="Berlin, Heidelberg",
pages="108--122",
isbn="978-3-540-45193-8",
doi="10.1007/978-3-540-45193-8_8",
volume="2833",
series="Lecture Notes in Computer Science"
}

@inproceedings{Sinz2005,
  title = {Towards an Optimal {CNF} Encoding of Boolean Cardinality Constraints},
  ISBN = {9783540320500},
  ISSN = {1611-3349},
  DOI = {10.1007/11564751_73},
  booktitle = {Principles and Practice of Constraint Programming - CP 2005},
  publisher = {Springer Berlin Heidelberg},
  author = {Sinz,  Carsten},
  editor = {van Beek, P.},
  year = {2005},
  pages = {827–831},
  series={Lecture Notes in Computer Science},
  volume={3709}
}

@inproceedings{CadicalPaper,
title={{CaDiCaL} 2.0},
DOI={10.1007/978-3-031-65627-9_7},
author={Biere, Armin and Faller, Tobias and Fazekas, Katalin and Fleury, Mathias and Froleyks, Nils and Pollitt, Florian},
year={2024},
pages={133–152},
series={Lecture Notes in Computer Science},
booktitle={Computer Aided Verification -- CAV 2024},
publisher={Springer},
volume={14681},
address={Cham},
editor={Gurfinkel, A. and Ganesh, V.}
}

@inproceedings{LamPaper,
  title={A {SAT}-based resolution of {Lam}'s problem},
  volume={35},
  DOI={10.1609/aaai.v35i5.16483},
  number={5},
  journal={Proceedings of the AAAI Conference on Artificial Intelligence},
  author={Bright, Curtis and Cheung, Kevin K. H. and Stevens, Brett and Kotsireas, Ilias and Ganesh, Vijay},
  year={2021},
  month={May},
  pages={3669–3676}
}

@inproceedings{CardinalityCadical,
author = {Reeves, Joseph E. and Heule, Marijn J. H. and Bryant, Randal E.},
title = {From Clauses to Klauses},
year = {2024},
isbn = {978-3-031-65626-2},
publisher = {Springer-Verlag},
address = {Berlin, Heidelberg},
editor = {Gurfinkel, A. and Ganesh, V.},
doi = {10.1007/978-3-031-65627-9_6},
booktitle = {Computer Aided Verification: 36th International Conference, CAV 2024, Montreal, QC, Canada, July 24–27, 2024, Proceedings, Part I},
pages = {110–132},
numpages = {23},
keywords = {Cardinality constraints, SAT solving, CNF Encoding},
location = {Montreal, QC, Canada},
volume={14681},
series={Lecture Notes in Computer Science}
}

@article{Brown1971,
  title={Advanced Problem 5811},
  journal={The American Mathematical Monthly},
  author={Tom C. Brown},
  year={1971},
  volume={78},
  number={7},
  pages={798},
  doi={10.1080/00029890.1971.11992858}
}

@article{Montgomery1972,
  title={Collinear Points on a Monotonic Polygon},
  journal={The American Mathematical Monthly},
  author={P. L. Montgomery},
  year={1972},
  volume={79},
  number={10},
  pages={1143--1144},
  doi={10.1080/00029890.1972.11993206}
}

@misc{A231255,
  title={On-Line Encyclopedia of Integer Sequences entry {A231255}},
  author={Jeff Shallit},
  year={2013},
  howpublished={\url{https://oeis.org/A231255}}
}

@inproceedings{Bright2020,
  title = {Effective Problem Solving Using {SAT} Solvers},
  ISBN = {9783030412586},
  ISSN = {1865-0937},
  DOI = {10.1007/978-3-030-41258-6_15},
  booktitle = {Maple in Mathematics Education and Research},
  publisher = {Springer International Publishing},
  series={Communications in Computer and Information Science},
  volume={1125},
  author = {Bright,  Curtis and Gerhard,  J\"{u}rgen and Kotsireas,  Ilias and Ganesh,  Vijay},
  year = {2020},
  pages = {205–219},
  editor={Gerhard, J. and Kotsireas, I.},
  address={Cham}
}

@inproceedings{Heule2024,
  title = {Happy Ending: An Empty Hexagon in Every Set of 30 Points},
  ISBN = {9783031572463},
  ISSN = {1611-3349},
  DOI = {10.1007/978-3-031-57246-3_5},
  booktitle = {30th International Conference on Tools and Algorithms for the Construction and Analysis of Systems},
  publisher = {Springer Nature Switzerland},
  author = {Heule,  Marijn J. H. and Scheucher,  Manfred},
  year = {2024},
  pages = {61–80},
  volume= {14570},
  series={Lecture Notes in Computer Science},
  editor={Finkbeiner, B. and Kovács, L.},
  address={Cham}
}

@inproceedings{Subercaseaux2025,
  title = {Automated Symmetric Constructions in Discrete Geometry},
  ISBN = {9783032070210},
  ISSN = {1611-3349},
  DOI = {10.1007/978-3-032-07021-0_3},
  booktitle = {Intelligent Computer Mathematics},
  publisher = {Springer Nature Switzerland},
  author = {Subercaseaux,  Bernardo and Mackey,  Ethan and Qian,  Long and Heule,  Marijn},
  year = {2025},
  month = oct,
  pages = {29–47},
  series={Lecture Notes in Computer Science},
  volume={16136},
  editor={de Paiva, V. and Koepke, P.}
}

@inproceedings{Heule2017,
  series = {IJCAI-2017},
  title = {Solving Very Hard Problems: Cube-and-Conquer, a Hybrid {SAT} Solving Method},
  DOI = {10.24963/ijcai.2017/683},
  booktitle = {Proceedings of the Twenty-Sixth International Joint Conference on Artificial Intelligence},
  publisher = {International Joint Conferences on Artificial Intelligence Organization},
  author = {Heule,  Marijn J. H. and Kullmann,  Oliver and Marek,  Victor W.},
  year = {2017},
  month = aug,
  pages = {4864–4868},
  collection = {IJCAI-2017}
}

@article{Bright2022,
  title = {When satisfiability solving meets symbolic computation},
  volume = {65},
  ISSN = {1557-7317},
  DOI = {10.1145/3500921},
  number = {7},
  journal = {Communications of the ACM},
  publisher = {Association for Computing Machinery (ACM)},
  author = {Bright,  Curtis and Kotsireas,  Ilias and Ganesh,  Vijay},
  year = {2022},
  month = jun,
  pages = {64–72}
}

@inproceedings{Cook1971,
  series = {STOC '71},
  title = {The complexity of theorem-proving procedures},
  DOI = {10.1145/800157.805047},
  booktitle = {Proceedings of the third annual ACM symposium on Theory of computing  - STOC ’71},
  publisher = {ACM Press},
  author = {Cook,  Stephen A.},
  year = {1971},
  pages = {151–158},
  collection = {STOC ’71}
}

@InProceedings{drattrim,
author="Wetzler, Nathan
and Heule, Marijn J. H.
and Hunt, Warren A.",
editor="Sinz, Carsten
and Egly, Uwe",
title="{DRAT}-trim: Efficient Checking and Trimming Using Expressive Clausal Proofs",
booktitle="Theory and Applications of Satisfiability Testing -- SAT 2014",
year="2014",
publisher="Springer International Publishing",
address="Cham",
pages="422--429",
doi="10.1007/978-3-319-09284-3_31",
series={Lecture Notes in Computer Science},
volume={8561}
}

@InProceedings{drat,
author="Buss, Sam
and Thapen, Neil",
editor="Janota, Mikol{\'a}{\v{s}} and Lynce, In{\^e}s",
title="{DRAT} Proofs, Propagation Redundancy, and Extended Resolution",
booktitle="Theory and Applications of Satisfiability Testing -- SAT 2019",
year="2019",
publisher="Springer International Publishing",
address="Cham",
pages="71--89",
doi="10.1007/978-3-030-24258-9_5",
series={Lecture Notes in Computer Science},
volume={11628}
}

@inproceedings{Heule2005,
  title = {March\_eq: Implementing Additional Reasoning into an Efficient Look-Ahead {SAT} Solver},
  ISBN = {9783540315803},
  ISSN = {1611-3349},
  DOI = {10.1007/11527695_26},
  booktitle = {Theory and Applications of Satisfiability Testing -- SAT 2004},
  publisher = {Springer Berlin Heidelberg},
  series={Lecture Notes in Computer Science},
  volume={3542},
  author = {Heule,  Marijn and Dufour,  Mark and van Zwieten,  Joris and van Maaren,  Hans},
  year = {2005},
  pages = {345–359},
  editor={Hoos, H.H. and Mitchell, D.G.}
}

@inproceedings{PySAT,
  author    = {Alexey Ignatiev and Antonio Morgado and Joao Marques{-}Silva},
  title     = {{PySAT:} {A} {Python} Toolkit for Prototyping with {SAT} Oracles},
  booktitle = {Theory and Applications of Satisfiability Testing -- SAT 2018},
  pages     = {428--437},
  year      = {2018},
  doi       = {10.1007/978-3-319-94144-8_26},
  series    = {Lecture Notes in Computer Science},
  volume    = {10929},
  publisher = {Springer},
  address   = {Cham},
  editor    = {Beyersdorff, Olaf and Wintersteiger, Christoph M.}
}

@article{crux,
  author={Michael W. Ecker},
  journal={Crux Mathematicorum},
  title={Problem 408},
  pages={294--296},
  year={1979},
  month={Dec},
  volume={10},
  publisher={Canadian Mathematical Society},
  url={https://cms.math.ca/wp-content/uploads/crux-pdfs/Crux_v5n10_Dec.pdf}
}

@article{Lidbetter2024,
  title = {Improved bound for the {Gerver-Ramsey} collinearity problem},
  volume = {347},
  ISSN = {0012-365X},
  DOI = {10.1016/j.disc.2023.113718},
  number = {1},
  journal = {Discrete Mathematics},
  publisher = {Elsevier BV},
  author = {Lidbetter,  Thomas F.},
  year = {2024},
  month = jan,
  pages = {113718}
}

@inproceedings{Subercaseaux2023,
  title = {The Packing Chromatic Number of the Infinite Square Grid is 15},
  ISBN = {9783031308239},
  ISSN = {1611-3349},
  DOI = {10.1007/978-3-031-30823-9_20},
  booktitle = {Tools and Algorithms for the Construction and Analysis of Systems},
  publisher = {Springer Nature Switzerland},
  author = {Subercaseaux,  Bernardo and Heule,  Marijn J. H.},
  year = {2023},
  pages = {389–406},
  volume={13993},
  series={Lecture Notes in Computer Science},
  editor={Sankaranarayanan, S. and Sharygina, N.},
  address={Cham}
}

@inproceedings{Subercaseaux2024,
  title = {Automated Mathematical Discovery and Verification: Minimizing Pentagons in the Plane},
  ISBN = {9783031669972},
  ISSN = {1611-3349},
  DOI = {10.1007/978-3-031-66997-2_2},
  booktitle = {Intelligent Computer Mathematics},
  publisher = {Springer Nature Switzerland},
  author = {Subercaseaux,  Bernardo and Mackey,  John and Heule,  Marijn J. H. and Martins,  Ruben},
  year = {2024},
  pages = {21–41},
  volume={14960},
  series={Lecture Notes in Computer Science},
  editor={Kohlhase, A. and Kovács, L.},
  address={Cham}
}

@phdthesis{Reeves2025,
title={Cardinality Constraints in Boolean Satisfiability Solving},
author={Joseph E. Reeves},
year={2025},
school={Carnegie Mellon University}
}

\end{document}